\newcommand{\oR}{{\mathbb R}}
\newcommand{\oN}{{\mathbb N}}
\newcommand{\oQ}{{\mathbb Q}}
\newcommand{\oZ}{{\mathbb Z}}
\newcommand{\EE}{\mathcal E}
\newcommand{\SSS}{{\mathcal S}}
\newcommand{\gd}{\text{\rm gd}}
\newcommand{\CUT}{\text{\rm CUT}}
\newcommand{\MET}{\text{\rm MET}}
\newcommand{\rank}{{\text{\rm rank}}}
\newcommand{\conv}{\text{\rm conv}}
\newcommand{\al}{\alpha}
\newcommand{\ext}{\text{\rm ext}}
\newcommand{\bc}{\begin{center}}
\newcommand{\ec}{\end{center}}
\newcommand{\sfT}{{\sf T}}
\newcommand{\NP}{{\mathcal NP}}
\newcommand{\bfO}{\mathbf 0}
\newcommand{\convEk}{\conv\hspace*{0.05cm}   \EE_k(G)}
\newcommand{\spp}{\hspace*{0.05cm}}
\newcommand{\haG}{\widehat{G}}
\newcommand{\haE}{\widehat{E}}
\newcommand{\hax}{\widehat{x}}
\newcommand{\haV}{\widehat{V}}
\newcommand{\wt}{\widetilde}
\newcommand{\ed}{\text{\rm ed}}
\newcommand{\tG}{\tilde G}
\newtheorem{theorem}{Theorem}[section]
\newtheorem{claim}{Claim}[section]
\newtheorem{definition}{Definition}[section]
\newtheorem{corollary}{Corollary}[section]
\newtheorem{proposition}{Proposition}[section]
\newtheorem{lemma}{Lemma}[section]
\newtheorem{remark}{Remark}[section]
\newcommand{\qed}{\hspace*{\fill} $\Box$  \medskip}
\newcommand{\ignore}[1]{}
\title{
Complexity of  the positive semidefinite matrix completion problem with a rank constraint}
\author[1]{ M. E.-Nagy\thanks{M.E.Nagy@cwi.nl}}
\author[1,2]{M. Laurent\thanks{M.Laurent@cwi.nl}}
 \author[1]{A. Varvitsiotis\thanks{A.Varvitsiotis@cwi.nl}}
\affil[1]{Centrum Wiskunde \& Informatica (CWI), Amsterdam, The Netherlands.}
\affil[2]{Tilburg University, Tilburg, The Netherlands.}
\begin{document}

\maketitle

\begin{abstract}
We consider the decision problem asking whether a partial rational symmetric matrix with an all-ones diagonal 
can be completed to a full  positive semidefinite matrix  of rank at most $k$.
  We show that  this problem is $\NP$-hard for any fixed integer $k\ge 2$. Equivalently, for $k\ge 2$, it is $\NP$-hard to test membership in the rank constrained elliptope $\EE_k(G)$, i.e.,  the set  of all partial matrices with off-diagonal entries specified at the edges of $G$, that can be completed to a positive semidefinite matrix of rank at most $k$.
Additionally,  we show  that deciding  membership in the convex hull of $\EE_k(G)$ is also $\NP$-hard for any fixed integer  $k\ge 2$.

\end{abstract}

\section{Introduction}

Geometric representations of graphs are widely studied within a broad range of mathematical areas, ranging from combinatorial matrix theory, linear algebra, discrete geometry, and combinatorial optimization. 
They arise typically  when labeling the nodes by vectors assumed to satisfy certain properties. For instance, one may require that the vectors labeling adjacent nodes are at distance 1, leading to unit distance graphs.
Or one may require that the vectors labeling adjacent nodes are orthogonal, leading to orthogonal representations of graphs.
One may furthermore ask, e.g., that nonadjacent nodes receive vector labels that are not orthogonal. Many other geometric properties of orthogonal labelings and other types of  representations related, e.g., to Colin de Verdi\`ere type graph parameters,
 are of interest and have been investigated (see \cite{CdV}). A basic question is to determine the smallest possible dimension of such  vector representations.
There is a vast literature, we refer in particular to the surveys \cite{FH07,FH11,Lo01}  and further references therein for additional information. 

\ignore{

 {\em unit distance graphs} (introduced by Erd\"os, Harary and Tutte \cite{EHT}) arise when  the vectors labeling adjacent nodes are assumed to be at distance 1. Various questions arise: What is the chromatic number? In the 2D case,  the chromatic number  is known to belong to $\{4,5,6,7\}$ (upper bound 7 proved by Hadwiger \cite{Ha}) but the exact value remains open. 
What is the smallest dimension $d(G)$ needed to realize the graph? It is  linked to the chromatic number of $G$, since
$\log \chi(G)\le d(G)\le 2\chi(G),$ but determing $d(G)$ 
 is an $\mathcal NP$-hard problem ({\bf add ref.}).
Or what is the smallest radius of a ball containing a unit distance representation? This parameter is easier as it can be computed with a semidefinite program, it is intimately linked to $\vartheta(G)$, the theta number of $G$.

Numerous other geometric properties of vector labelings are of interest. We mention in particular the bulk of literature on graph parameters related -- and motivated by -- the graph parameter $\mu(G)$ introduced by Colin de Verdi\`eres \cite{CdV}, see e.g. the survey \cite{} for details and further references.   {\em Orthogonal representations},  where the vectors labeling adjacent nodes are required to be orthogonal,  arise naturally in many contexts -- for instance in connection to the Lov\'asz' theta number $\vartheta(G)$,  bounding the stability number $\alpha(G)$   and the chromatic number $\chi(G)$.  In fact they are special instances of Gram embeddings of graphs, introduced in \cite{LV11} and further discussed in this note.
}

In this note we revisit orthogonal representations of graphs, in the wider context of Gram representations of weighted graphs. 
We show some complexity results for the following notion of Gram dimension,  which has been considered 
in \cite{LV11,LV12}. 

\begin{definition}\label{defGram1}
Given a graph $G=(V=[n],E)$ and  $x\in \oR^E$, a Gram representation of $x$ in $\oR^k$ consists of a set of unit vectors $v_1,\cdots, v_n\in \oR^k$ such that 
$$v_i^\sfT v_j = x_{ij} \ \ \forall \{i,j\}\in E.$$
The {\em Gram dimension} of $x$, denoted as $\gd(G,x)$, is the smallest integer $k$ for which $x$ has  such a Gram representation in $\oR^k$ (assuming it has one in some space). 
\end{definition}

As we restrict our attention to Gram representations of $x\in \oR^E$ by unit vectors, all coordinates of $x$ should lie in the interval $[-1,1]$, so that we can parametrize $x$ as 
$$x=\cos (\pi a), \ \ \ \text{ where } a\in [0,1]^E.$$
In other words,  the inequality $\gd(G,x)\le k$ means that $(G,a)$ can be isometrically embedded into the spherical metric space $({\mathbf S}^{k-1},d_{\mathbf S})$, where ${\mathbf S}^{k-1}$ is the unit sphere in the Euclidean space $\oR^k$ and $d_{\mathbf S}$ 
is the spherical distance:
$$d_{\mathbf S}(u,v)=\arccos(u^\sfT v)/\pi\ \ \forall u,v\in {\mathbf S}^{k-1}.$$
Moreover, there are  also tight connections with  graph realizations in the Euclidean space (cf. 
 \cite{B07,BC07}); see Section \ref{secedm} for a brief discussion and Section \ref{seck2} for further results.

\medskip
 Determining the Gram dimension  can also be reformulated in terms of finding low rank positive semidefinite matrix completions of partial matrices, as we now see.
We use  the following notation:
 $\SSS^n$ denotes the set of symmetric $n\times n$ matrices and $\SSS^n_+$ is the cone of positive semidefinite (psd) matrices in $\SSS^n$. The subset 
 $$\EE_n=\{X\in \SSS^n_+: X_{ii}=1 \ \forall i\in [n]\},$$
 consisting of all positive semidefinite  matrices with an all-ones diagonal (aka the correlations matrices), is known as the {\em elliptope}.
 Given a graph $G=([n],E)$, $\pi_E$ denotes the projection from $\SSS^n$ onto the subspace $\oR^E$ indexed by the edges of $G$. Then, the projection $\EE(G)=\pi_E(\EE_n)$ is known as the {\em elliptope} of the graph $G$. Given an integer $k\ge 1$,
 define the {\em rank constrained elliptope}
 $$\EE_{n,k}=\{X\in \EE_n: \rank (X)\le k\},$$
 and, for any graph $G$,  its projection 
 $\EE_k(G)=\pi_E(\EE_{n,k})$.
 Then the points  $x$ in the elliptope $\EE(G)$ correspond  precisely to those vectors $x\in\oR^E$ that admit a Gram representation by unit vectors. Moreover, $x\in \EE_k(G)$ precisely when it has  a Gram representation by unit vectors in $\oR^k$; that is:
 $$x\in \EE_k(G)\Longleftrightarrow \gd(G,x)\le k.$$

 The elements of $\EE(G)$ can be seen as the $G$-partial symmetric matrices, i.e., the partial matrices whose entries are specified at the off-diagonal positions corresponding to edges of $G$ and whose diagonal entries are all equal to 1, that can be completed to a positive semidefinite matrix.
 Hence the problem of deciding membership in $\EE(G)$ can be reformulated as the problem of testing whether a given $G$-partial matrix can be completed to a psd matrix.
 Moreover, for fixed $k\ge 1$, the membership problem in $\EE_k(G)$ is  the problem of deciding whether a given $G$-partial matrix has a psd completion of rank at most $k$.  Using the notion of Gram dimension this can be equivalently formalized as:  
 
\medskip\noindent
 {\em  Given a graph $G=(V,E)$ and  $x\in \oQ^E$, decide whether $\gd(G,x)\le k$.}
 
 \medskip
 A first main result of our paper is to prove $\NP$-hardness of this  problem for any fixed $k\ge 2$ (cf. Theorems \ref{theomaink3} and \ref{theogd2}).
Additionally,  we  consider the problem of testing membership in the convex hull of the rank constrained elliptope:  
 
 \medskip\noindent
 {\em  Given a graph $G=(V,E)$ and $x\in \oQ^E$, decide whether $x\in \convEk$.}
 
\medskip
The study of this problem  is motivated by the relevance of the convex set  $\convEk$  to the maximum cut problem and to the rank constrained Grothendieck problem.
Indeed, for $k=1$, $\conv \spp \EE_1(G)$ coincides with the cut polytope of $G$ and it is well known that linear optimization over the cut polytope is  $\NP$-hard ~\cite{GJ79}.
For  any $k\ge 2$,   the worst case ratio of optimizing a  linear  function over the elliptope $\EE(G)$ versus   the rank constrained elliptope $\EE_k(G)$ (equivalently, versus the convex hull $\convEk$)  is known as the {\em rank $k$ Grothendieck constant}  of the graph $G$ (see \cite{BOV11} for  results and further references).   It is believed that linear optimization over $\convEk$ is also hard for any fixed $k$ (cf., e.g., the quote of Lov\'asz \cite[p. 61]{Lo01}).We show that the strong membership problem in $\convEk$ is $\NP$-hard, thus providing some evidence of hardness of optimization (cf. Theorem \ref{theoextgd}).

\medskip
{\bf Contents of the paper.}
In Section \ref{secpreli} we present some background geometrical  facts about cut and metric polytopes, about elliptopes, and about Euclidean graph realizations.
In  Section \ref{secEk} we show $\NP$-hardness of the membership problem in $\EE_k(G)$ for any fixed $k\ge 2$;  we use two different reductions  depending whether $k=2$ or $k\ge 3$. 
In Section \ref{secconvEk} we show $\NP$-hardness of the membership problem in
 the  convex hull of $\EE_k(G)$  for any fixed $k\ge 2$.
In Section \ref{secedm} we discuss links to complexity results for Euclidean graph realizations, and in Section \ref{secfinal} we conclude with some open questions.

\medskip
{\bf Some notation.} Throughout $K_n=([n],E_n)$ is the complete graph on $n$ nodes;
$C_n$ denotes the circuit of length $n$, with node set $[n]$ and with edges the pairs $\{i,i+1\}$ for $i\in [n]$ (indices taken modulo $n$), and its set of edges is again denoted as  $C_n$ for simplicity.
Given a graph $G=(V,E)$, its {\em suspension graph} $\nabla G$ is the new graph obtained from $G$ by adding a new node, called the {\em apex} node and often denoted as $0$, which is adjacent to all the nodes of $G$.
A {\em minor} of $G$ is any graph which can be obtained from $G$ by iteratively deleting edges or nodes and contracting edges.

\section{Preliminaries}\label{secpreli}
We recall here some  basic geometric  facts about metric and cut polyhedra, about elliptopes, and  about Euclidean graph realizations.

\subsection{Metric and cut polytopes}

First we recall the definition of the {\em metric polytope} $\MET(G)$ of a graph $G=(V,E)$.
As a motivation recall the following basic 3D geometric result:  Given a  matrix   $X=(x_{ij})$ in the elliptope  $ \EE_3$, 
 parametrized as before by   $x_{ij}=\cos (\pi a_{ij})$ where  $a_{ij}\in [0,1]$, then  $X\succeq 0$ if and only if  the $a_{ij}$'s  satisfy the following {\em triangle inequalities}:
\begin{equation}\label{eqtriangle}
a_{ij}\le a_{ik}+a_{jk},\ a_{ij}+a_{ik}+a_{jk}\le 2\ \
\end{equation}
for distinct $i,j,k\in \{1,2,3\}$.
(See e.g. \cite{BJL96}).  The elliptope $\EE_3$ (or rather, its bijective image $\EE(K_3)$) 
 is illustrated in Figure~\ref{E3}.

The metric polytope of the complete graph $K_n=([n],E_n)$  is the polyhedron in $\oR^{E_n}$ defined by the above $4{n\choose 3}$ triangle inequalities
(\ref{eqtriangle}). 
More generally, 
the metric polytope of a graph $G=([n],E)$ is the polyhedron $\MET(G)$ in  $\oR^E$, which is defined by the following linear inequalities (in the variable $a\in\oR^E$):
\begin{equation}\label{reledge}
0\le a_e\le 1\ \ \forall e\in E,
\end{equation}
\begin{equation}\label{relcyc}
a(F)-a(C\setminus F) \le |F|-1 
\end{equation}
for all circuits  $C$ of $G$ and for all odd cardinality subsets $F\subseteq C$.

As is well known, the inequality (\ref{reledge}) defines a facet of $\MET(G)$ if and only if the edge $e$ does not belong to a triangle of $G$, while (\ref{relcyc}) defines a facet of $\MET(G)$ if and only if the circuit $C$ has no chord (i.e., two non-consecutive nodes on $C$ are not adjacent in $G$).
In particular, for  $G=K_n$, $\MET(K_n)$ is defined by the  triangle inequalities (\ref{eqtriangle}), obtained by considering only the inequalities (\ref{relcyc}) where  $C$ is a circuit of length 3.
Moreover, $\MET(G)$  coincides with the projection of $\MET(K_n)$ onto the subspace $\oR^E$ indexed by the edge set of $G$. (See \cite{DL97} for details.)

\medskip
A main motivation for considering the metric polytope is that it gives a tractable linear relaxation of the cut polytope.
Recall that the rank 1 matrices in the elliptope $\EE_n$ are of the form $uu^\sfT$ for all $u\in \{\pm 1\}^n$, they are sometimes called the {\em cut matrices} since they   correspond to the cuts of the complete graph $K_n$. Then the {\em cut polytope} $\CUT(G)$ is defined as the projection onto $\oR^E$ of the convex hull of the cut matrices: 
\begin{equation}\label{eqcut}
\CUT(G)=\pi_E(\conv(\EE_{n,1})).
\end{equation}
It is always true that  $\CUT(G)\subseteq \MET(G)$, moreover  equality holds if and only if $G$ has no $K_5$ minor  \cite{BM}.
Linear optimization over the cut polytope models the maximum cut problem, 
 well known to be $\NP$-hard  \cite{GJ79},  and  testing membership in the cut polytope $\CUT(K_n)$ or, equivalently,  in the convex hull of the rank constrained elliptope $\EE_{n,1}$, is an $\NP$-complete problem  \cite{AD91}.

\subsection{Elliptopes}

From the above discussion about the elliptope $\EE_3$ and the metric polytope, we can derive the following necessary condition for membership in the elliptope $\EE(G)$ of a graph $G$, which turns out to be sufficient  when $G$ has no  $K_4$ minor. 

 \begin{proposition}\label{propEG}
 \cite{Lau97} For any graph $G=(V,E)$,
$$\EE(G)\subseteq \left\{x\in [-1,1]^E: {1\over \pi}\arccos x \in \MET(G)\right\}.$$
Moreover, equality holds if and only if   $G$ has no $K_4$ minor.
\end{proposition}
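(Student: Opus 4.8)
The plan is to establish the inclusion and the equality characterization separately. For the inclusion, I would take $x\in\EE(G)$, so that $x$ has a Gram representation by unit vectors $v_1,\dots,v_n$; in particular $x\in[-1,1]^E$, and I set $a=\tfrac1\pi\arccos x\in[0,1]^E$. The idea is to extend $a$ to the whole of $K_n$ by putting $\tilde a_{ij}=\tfrac1\pi\arccos(v_i^\sfT v_j)$ for all pairs. For any three indices $i,j,k$, the corresponding principal submatrix of the Gram matrix $(v_i^\sfT v_j)$ lies in $\EE_3$, so the $3$D result recalled in (\ref{eqtriangle}) shows that $(\tilde a_{ij},\tilde a_{ik},\tilde a_{jk})$ obeys the triangle inequalities. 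Hence $\tilde a\in\MET(K_n)$, and since $\MET(G)=\pi_E(\MET(K_n))$ with $a=\pi_E(\tilde a)$, we conclude $a\in\MET(G)$, which is exactly the asserted inclusion.

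For the direction giving equality, suppose $G$ has no $K_4$ minor; then $G$ is a partial $2$-tree, i.e.\ a spanning subgraph of a chordal graph $T$ whose maximal cliques all have at most $3$ nodes. Given $a\in\MET(G)$, I would first extend it to some $\tilde a\in\MET(T)$, which is possible because $\MET(T)$ projects onto $\MET(G)$ when $G\subseteq T$. On each triangle of $T$ the triangle inequalities hold, so the $3$D result gives that every clique submatrix of $\cos(\pi\tilde a)$ is psd. A psd completion along the chordal graph $T$ --- either from the standard clique-sum completion theorem, or constructed by hand from the base cases $K_1,K_2,K_3$ by a gluing step that places the two partial representations in subspaces meeting only along the shared clique --- yields $\cos(\pi\tilde a)\in\EE(T)$, and restricting back gives $x=\cos(\pi a)\in\EE(G)$. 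This proves equality.

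For the converse I would argue that the property ``equality holds'' is closed under taking minors and fails already for $K_4$. Closure under deleting an edge or a vertex is immediate, since both sides of the inclusion project onto the corresponding sides for the smaller graph, and equal sets remain equal under projection. Closure under contracting an edge $e=\{u,v\}$ I would obtain by slicing at $x_e=1$ (equivalently $a_e=0$): this slice of $\EE(G)$ identifies with $\EE(G/e)$, because $x_e=1$ forces $v_u=v_v$ in every psd completion, and the analogous slice of the metric region identifies with the image of $\{a\in\MET(G):a_e=0\}$, namely $\MET(G/e)$. Intersecting the two equal sets with $\{x_e=1\}$ and passing through the merge map then gives equality for $G/e$. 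Finally, equality fails for $K_4$: the point $a_{ij}=\tfrac23$ on all six edges satisfies all triangle inequalities (they read $\tfrac23\le\tfrac43$ and $2\le2$), so it lies in $\MET(K_4)$, yet $\cos(\pi a)=-\tfrac12$ on every edge produces the matrix $\tfrac32 I-\tfrac12 J$, whose eigenvalue $-\tfrac12$ shows it is not psd; hence $\cos(\pi a)\notin\EE(K_4)$. Thus any $G$ with a $K_4$ minor admits a minor for which equality fails, so equality fails for $G$ itself.

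I expect the main obstacle to be the contraction step of the minor-closure argument, namely verifying that the slice $\{a\in\MET(G):a_e=0\}$ really corresponds to $\MET(G/e)$; this amounts to matching the cycle inequalities of $G/e$ with the $a_e=0$ restrictions of those of $G$, a standard but slightly delicate fact about the behavior of the metric polytope under contraction. By comparison, the inclusion and the explicit $K_4$ counterexample are short.
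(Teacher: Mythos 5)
The paper itself gives no proof of this proposition --- it is quoted from \cite{Lau97} --- so there is no in-paper argument to compare against; judged on its own, your proof is correct and follows what is essentially the standard route. The inclusion (extend the Gram representation to all of $K_n$, apply the $3\times 3$ characterization (\ref{eqtriangle}) to every triple, then use $\MET(G)=\pi_E(\MET(K_n))$) is sound. For sufficiency your reduction to a spanning 2-tree $T$ is the intended mechanism; note that you do not even need the chordal completion theorem of Grone--Johnson--S\'a--Wolkowicz, since a 2-tree is an iterated clique sum of triangles and the paper's own Lemma~\ref{lemcliquesum} (applied with $k$ at least the number of nodes, i.e., with no effective rank bound) gives the completion by induction. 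For necessity, minor-monotonicity plus the point $a\equiv 2/3$ on $K_4$, whose cosine matrix $\tfrac{3}{2}I-\tfrac{1}{2}J$ has eigenvalue $-\tfrac{1}{2}$, is exactly the standard counterexample, and your computation is right. Finally, the contraction step you flag as the main obstacle is true and can be dispatched more cleanly than by matching cycle inequalities: since $e=\{u,v\}$ is a retained coordinate, slicing commutes with projection, so $\{a\in\MET(G):a_e=0\}=\pi_E\left(\{\tilde a\in\MET(K_n):\tilde a_{uv}=0\}\right)$; in the slice of $\MET(K_n)$ the triangle inequalities force $\tilde a_{uw}=\tilde a_{vw}$ for all $w$, identifying that slice with $\MET(K_{n-1})$, and projecting onto $E(G/e)$ then yields $\MET(G/e)$. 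The same slicing argument on the elliptope side (where $X_{uv}=1$ forces the Gram vectors of $u$ and $v$ to coincide) completes your minor-closure argument, so the proposal goes through without gaps.
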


\ignore{
This result was used in \cite{Lau00} to give polynomial time algorithms (in the real number model of computation) to construct  positive semidefinite matrix completions for  graphs with no $K_4$ minor.}
This result permits, in particular, to characterize membership in the elliptope  $\EE(C_n)$ of a circuit.

 \begin{corollary}\cite{BJL96}
Consider a vector $x=\cos(\pi a) \in \oR^{C_n}$ with $a\in [0,1]^{C_n}$. Then, $x\in \EE(C_n)$  if and only if  $a$ satisfies the linear inequalities
\begin{equation}\label{eqcircuit}
a(F)-a(C_n\setminus F)\le |F|-1\ \ \forall F\subseteq C_n \ \text{ with  } |F|  \text{ odd}.
\end{equation}
\end{corollary}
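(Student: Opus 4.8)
The plan is to derive the corollary directly from Proposition~\ref{propEG}, by specializing the general characterization of $\EE(G)$ to the case $G=C_n$ and then writing out the metric polytope $\MET(C_n)$ explicitly.

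First I would note that the circuit $C_n$ has no $K_4$ minor: any minor of $C_n$ is obtained by deleting and contracting edges of a single cycle, which produces only shorter cycles, paths, and forests, none of which contains $K_4$. Hence Proposition~\ref{propEG} applies in the case of equality, giving
$$\EE(C_n)=\left\{x\in[-1,1]^{C_n}:\tfrac{1}{\pi}\arccos x\in\MET(C_n)\right\}.$$
Next I would make the parametrization explicit. Writing $x=\cos(\pi a)$ with $a\in[0,1]^{C_n}$, each coordinate satisfies $\pi a_e\in[0,\pi]$, which is precisely the range of $\arccos$, so that $\tfrac{1}{\pi}\arccos(x_e)=a_e$ for every edge $e$. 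Thus $\tfrac{1}{\pi}\arccos x=a$, and under the hypothesis $a\in[0,1]^{C_n}$ the membership $x\in\EE(C_n)$ becomes equivalent to $a\in\MET(C_n)$.

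The key step is to describe $\MET(C_n)$ concretely. Recall from (\ref{reledge})--(\ref{relcyc}) that $\MET(G)$ is cut out by the box constraints $0\le a_e\le 1$ together with the cycle inequalities (\ref{relcyc}), ranging over all circuits $C$ of $G$ and all odd-cardinality subsets $F\subseteq C$. For $G=C_n$ the only circuit of $G$ is $C_n$ itself, since its $n$ edges form a single loop and no proper subset of them spans a shorter cycle. Consequently the cycle inequalities defining $\MET(C_n)$ reduce exactly to the inequalities (\ref{eqcircuit}), taken over the odd-cardinality subsets $F\subseteq C_n$, while the box constraints $0\le a_e\le 1$ are already built into the hypothesis $a\in[0,1]^{C_n}$.

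Combining these observations yields the claimed equivalence: for $a\in[0,1]^{C_n}$, the point $x=\cos(\pi a)$ lies in $\EE(C_n)$ if and only if $a$ satisfies (\ref{eqcircuit}). I do not anticipate a genuine obstacle here, since all the analytic content is supplied by Proposition~\ref{propEG}; the only thing that must be checked by hand is the elementary combinatorial fact that the sole circuit of the graph $C_n$ is $C_n$ itself, which guarantees that no cycle inequalities beyond those in (\ref{eqcircuit}) appear in the description of $\MET(C_n)$.
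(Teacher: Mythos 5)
Your proposal is correct and follows exactly the route the paper intends: the corollary is presented as a direct specialization of Proposition~\ref{propEG} (the phrase ``this result permits, in particular, to characterize membership in $\EE(C_n)$''), using that $C_n$ has no $K_4$ minor, that $\tfrac{1}{\pi}\arccos(\cos(\pi a))=a$ for $a\in[0,1]^{C_n}$, and that the only circuit of $C_n$ is $C_n$ itself, so $\MET(C_n)$ is cut out by the box constraints together with the inequalities (\ref{eqcircuit}).
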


We also recall the following result of \cite{LV11} which characterizes membership in the rank constrained elliptope $\EE_k(C_n)$ of a circuit in the case $k=2$; see Lemma \ref{lemgd2} for an extension to arbitrary graphs.

\begin{lemma}\cite{LV11}\label{lemE2Cn}
For $x\in [-1,1]^{C_n}$, $x\in \EE_2(C_n)$ if and only if  there exists $\epsilon \in \{\pm 1\}^{C_n}$ such that 
$\epsilon ^\sfT \arccos x \in 2\pi \oZ$.
\end{lemma}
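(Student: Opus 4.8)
The plan is to use the fact that a Gram representation of $x$ in $\oR^2$ is just a placement of the nodes on the unit circle, so that the edge constraints become prescriptions on the angular differences between consecutive nodes, which I can then sum around the circuit. To fix notation I would set $\alpha_i=\arccos(x_{i,i+1})\in[0,\pi]$ for $i\in[n]$ (indices modulo $n$, so edge $n$ is $\{n,1\}$), whence $\arccos x=(\alpha_1,\dots,\alpha_n)$. For the forward direction, suppose $x\in\EE_2(C_n)$, so by Definition \ref{defGram1} there exist unit vectors $v_1,\dots,v_n\in\oR^2$ with $v_i^\sfT v_{i+1}=x_{i,i+1}$. Writing each point on the circle as $v_j=(\cos\theta_j,\sin\theta_j)$ gives $v_i^\sfT v_{i+1}=\cos(\theta_i-\theta_{i+1})$, so the edge constraint reads $\cos(\theta_i-\theta_{i+1})=\cos\alpha_i$. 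Since cosine is even and $2\pi$-periodic, this forces $\theta_i-\theta_{i+1}\equiv\epsilon_i\,\alpha_i\pmod{2\pi}$ for some sign $\epsilon_i\in\{\pm1\}$. Summing these $n$ congruences, the left-hand side telescopes to $\theta_1-\theta_1=0$, yielding $\sum_{i=1}^n\epsilon_i\alpha_i\equiv 0\pmod{2\pi}$, that is, $\epsilon^\sfT\arccos x\in 2\pi\oZ$.

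For the converse, given $\epsilon\in\{\pm1\}^{C_n}$ with $\epsilon^\sfT\arccos x\in 2\pi\oZ$, I would build the angles explicitly by setting $\theta_1=0$ and $\theta_{i+1}=\theta_i-\epsilon_i\alpha_i$ recursively for $i=1,\dots,n-1$, and then taking $v_j=(\cos\theta_j,\sin\theta_j)$. On every edge $\{i,i+1\}$ with $i\le n-1$ one has $v_i^\sfT v_{i+1}=\cos(\epsilon_i\alpha_i)=\cos\alpha_i=x_{i,i+1}$ by construction. The only point requiring care is the closing edge $\{n,1\}$, and this is exactly where the hypothesis enters: from the recursion $\theta_1-\theta_n=\sum_{i=1}^{n-1}\epsilon_i\alpha_i$, and since $\sum_{i=1}^n\epsilon_i\alpha_i\in 2\pi\oZ$ we get $\theta_1-\theta_n\equiv-\epsilon_n\alpha_n\pmod{2\pi}$, so that $v_n^\sfT v_1=\cos(\theta_n-\theta_1)=\cos\alpha_n=x_{n,1}$, again using that cosine is even. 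Thus $(v_j)$ is a Gram representation of $x$ in $\oR^2$ and $x\in\EE_2(C_n)$.

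I do not anticipate a genuine obstacle: the whole content is the observation that a unit-vector labeling of a circuit in the plane is the same as a winding of angles whose increments are prescribed up to sign and up to multiples of $2\pi$. The one thing to watch is the bookkeeping of the modular arithmetic around the cycle, namely ensuring that the telescoping sum closes up correctly and keeping track of the fact that the even, $2\pi$-periodic nature of cosine is precisely what produces both the sign freedom encoded by $\epsilon$ and the $2\pi\oZ$ slack in the statement.
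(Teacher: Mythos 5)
Your proof is correct and follows essentially the same route as the paper's: parametrize the unit vectors by angles, use the evenness and $2\pi$-periodicity of cosine to extract the signs $\epsilon_i$, and let the telescoping sum around the circuit produce (or, conversely, absorb) the $2\pi\oZ$ condition. This is also exactly the argument the paper uses for its generalization to arbitrary graphs (Lemma \ref{lemgd2}), so no further comment is needed.
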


\ignore{
\begin{proof}
We first prove necessity. Assume that $x\in \EE_2(C_n)$. That is, there exist unit vectors $u_1,\ldots,u_n\in \oR^2$ such that
$u_i^\sfT u_{i+1}= \cos (\pi a_i)$ for all $i\in [n]$ (setting $u_{n+1}=u_1$).
Up to applying an orthogonal transformation, we may assume that  $u_1=(1,  0)^\sfT$.
Then, $u_1^\sfT u_2=\cos (\pi a_1)$,   and thus  $$u_2=(\cos (\epsilon_1 \pi a_1),
\sin(\epsilon_1 \pi a_1))^\sfT$$ for some $\epsilon_1\in\{\pm 1\}$.
Analogously, $u_2^\sfT u_3=\cos (\pi a_2)$, which  implies that
 $$u_3=(\cos(\epsilon_1\pi a_1+\epsilon_2\pi a_2 ), \sin(\epsilon_1\pi a_1+\epsilon_2\pi a_2))^\sfT$$ for some $\epsilon_2\in \{\pm 1\}$.
Iterating, we find  $\epsilon_1,\cdots, \epsilon_n \in\{\pm 1\}$ such that
$$u_{i}=\left(\cos\left(\sum_{j=1}^{i-1}\epsilon_i \pi a_i\right),  \sin \left(\sum_{j=1}^{i-1}\epsilon_i \pi a_i\right)\right)^\sfT$$ for $i=1,\ldots,n$.
Finally, the condition $u_n^\sfT u_1=\cos (\pi a_n)= \cos (\sum_{i=1}^{n-1} \epsilon_i \pi a_i)$
 implies that $\sum_{i=1}^n\epsilon_i\pi a_i\in 2\pi\oZ$ and thus $\sum_{i=1}^n \epsilon_ia_i\in 2\oZ$.
 The arguments can be reversed to show sufficiency.
 \qed\end{proof}
 }

We conclude with some observations about the elliptope $\EE(C_n)$  of a circuit.
Figure \ref{E3} shows the elliptope $\EE(C_3)$. Points $x$ on the boundary of $\EE(C_3)$ have $\gd(C_3,x)=2$ except $\gd(C_3,x)=1$ at the four corners (corresponding to the four cuts of $K_3$), while points in the interior of $\EE(C_3)$ have $\gd(C_3,x)=3$.

\begin{figure}[h!]
\centering \includegraphics[scale=0.8]{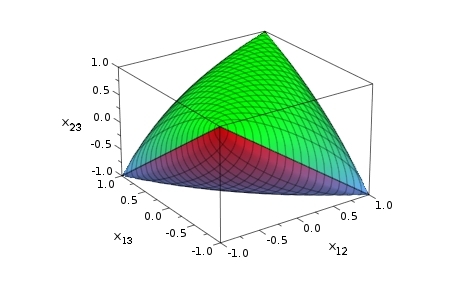}
\caption{The elliptope $\EE(C_3)$.}
\label{E3} 
\end{figure}
Now let $n\ge 4$. Let $x=\cos (\pi a)\in \EE(C_n)$, thus $a\in [0,1]^{C_n}$ satisfies the inequalities (\ref{eqcircuit}).
It is known that $\gd(C_n,x)\le 3$ (see \cite{LV11}, or derive it directly by triangulating $C_n$ and applying Lemma \ref{lemcliquesum} below).
Moreover, $x$ lies in the interior of $\EE(C_n)$ if and only if $x$ has a positive definite completion or, equivalently, $a$ lies in the interior of the metric polytope $\MET(C_n)$. 

If $x$ lies on the boundary of $\EE(C_n)$ then, either (i) $a_e\in \{0,1\}$ for some edge $e$ of $C_n$, or (ii) 
$a$ satisfies an inequality (\ref{eqcircuit}) at equality. In case (i), $\gd(C_n,x)$ can be equal to 1 ($x$ is a cut), 2, or 3. In case (ii), by Lemma \ref{lemE2Cn},
$\gd(C_n,x)\le 2$ since $a(F)-a(C_n\setminus F) =|F|-1 \in 2\oZ$ for some $F\subseteq C_n$. 
If $x$ is in the interior of $\EE(C_n)$ then $\gd(C_n,x)\in \{2,3\}$.

As an illustration, for $n=4$, consider the  vectors $x_1=(0,0,0,1)^\sfT$, $x_2=(0, \sqrt 3/2, \sqrt 3/2, \sqrt 3/2)^\sfT$, 
$x_3=(0,0,0,0)^\sfT$ and $x_4=(0,0,0,1/2)^\sfT \in \oR^{C_4}$.
Then both $x_1$ and $x_2$ lie on the boundary of $\EE(C_4)$ with $\gd(C_4,x_1)=3$ and $\gd(C_4,x_2)=2$, and both $x_3$ and $x_4$ lie in the interior of $\EE(C_4)$ with $\gd(C_4,x_3)=2$ and $\gd(C_4,x_4)=3$.

\subsection{Euclidean graph realizations}\label{secedm}

In this section we recall some basic facts about Euclidean graph realizations.

\begin{definition}\label{defedm}
Given a graph $G=([n],E)$ and  $d\in \oR_{+}^{E}$,  a Euclidean (distance) representation of $d$ in $\oR^k$
consists of  a set of vectors $p_1,\ldots,p_n\in \oR^k$  such that $$\| p_i-p_j\|^2=d_{ij}\ \ \forall  \{i,j\} \in E.$$
Then,   $\ed(G,d)$ denotes the smallest  integer $k\ge 1 $ for which $d$ has a  Euclidean representation in $\oR^k$
(assuming $d$ has a Euclidean representation in some space).
\end{definition}
Then the problem of interest is to decide  whether a given vector  $d \in \oQ_{+}^{E}$ admits a Euclidean representation in $\oR^k$. Formally, for fixed $k \ge 1$, we consider the following problem:

\medskip\noindent
{\em Given a graph $G=(V,E)$ and $d \in \oQ_{+}^{E}$, decide whether \ed$(G,d)\le k$.}
\medskip

This problem has been extensively studied (e.g. in \cite{B07,BC07}) and its complexity is well understood. In particular, using a reduction from the 3SAT problem,  Saxe \cite{Saxe} shows the following complexity result.

\begin{theorem}\cite{Saxe} \label{theoSaxe}
For any fixed $k\ge 1$, deciding whether \ed$(G,d) \le k$ is $\NP$-hard, already when restricted to weights $d\in \{1,2\}^E$.
\end{theorem}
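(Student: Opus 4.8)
The plan is to give a polynomial reduction from 3SAT, building for each instance $\varphi$ a graph $G$ with weights $d\in\{1,2\}^E$ such that $\ed(G,d)\le k$ if and only if $\varphi$ is satisfiable. Since weight $2$ corresponds to Euclidean distance $\sqrt2$ and weight $1$ to distance $1$, the realization problem becomes that of placing points in $\oR^k$ with all realized edge-distances lying in $\{1,\sqrt2\}$; the arithmetic fact that $1$ and $\sqrt2$ are linearly independent over $\oQ$ is what lets such placements behave rigidly and thereby encode discrete (Boolean) information.

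The core of the reduction is a family of rigid \emph{logic gadgets} realizable, up to isometry, only in finitely many ways, each encoding a combinatorial choice. I would first fix the smallest nontrivial dimension (here $k=2$) and design: (i) a \emph{variable gadget} whose realizations fall into exactly two isometry classes, corresponding to the two truth values — typically a gadget rigid up to a reflection, so that a distinguished vertex sits on one of two sides of a fixed line; (ii) \emph{wire gadgets} that propagate, and where needed \emph{cross over}, these binary states between gadgets; and (iii) a \emph{clause gadget} admitting a valid realization precisely when at least one of its three incoming literal-states is set to true. Chaining the variable gadgets into the clause gadgets yields a graph realizable in the plane if and only if $\varphi$ has a satisfying assignment. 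Each gadget has bounded size and uses only the two lengths $1$ and $\sqrt2$, so the reduction runs in polynomial time and produces only these two weight values, giving strong $\NP$-hardness.

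To reach an arbitrary fixed $k$, I would attach to the planar logic framework a rigid \emph{scaffold}: a fixed subgraph realizable essentially uniquely in $\oR^k$, spanning all $k$ coordinate directions, and linked to the logic vertices by further $\{1,2\}$-weighted edges that confine every logic vertex to a common $2$-dimensional affine flat of the scaffold. Then any realization in $\oR^k$ restricts to a planar realization of the logic part, and conversely a planar realization extends to $\oR^k$ by placing the scaffold around it; hence $\ed(G,d)\le k$ iff $\varphi$ is satisfiable. The degenerate one-dimensional case is treated separately, the line being too constrained to host the gadgets directly.

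The main obstacle is the geometric engineering of these gadgets under the severe restriction to only two distance values. One must (a) certify that each gadget is genuinely rigid — that its realizations are exactly the intended discrete set, with no unintended continuous deformation or spurious configuration — and (b) build the scaffold so that it simultaneously pins down the full $k$-dimensional frame and collapses the logic vertices onto a single $2$-flat, without introducing new realizations that would break soundness. Verifying this rigidity and the absence of parasitic solutions, while keeping all weights in $\{1,2\}$, is the delicate and calculation-heavy heart of the argument; once the gadgets are certified, correctness of the reduction (completeness and soundness) follows by routine case analysis over the finitely many realizations of each gadget.
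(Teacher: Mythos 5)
There is a genuine gap --- in fact the proposal is a proof \emph{plan} rather than a proof. All of the technical content (the variable, wire and clause gadgets, and the certification that each is rigid with no parasitic realizations) is explicitly deferred as ``the delicate and calculation-heavy heart of the argument,'' so neither soundness nor completeness of the reduction can be checked. For calibration: the paper itself does not prove this statement; it quotes it from Saxe \cite{Saxe}, whose 3SAT reduction runs in dimension $1$ --- precisely the case you set aside as ``too constrained to host the gadgets.'' That instinct is backwards: on a line every edge leaves only a binary choice of direction (cf.\ Lemma \ref{lemed1}), which is exactly the Boolean freedom a 3SAT reduction wants, and this is why dimension $1$ is the natural base case rather than an awkward degenerate one. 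A further misconception: edges here are distance constraints, not drawn segments, so realizations may ``cross'' freely and no crossover gadgets are needed; planarity of a gadget layout is irrelevant to this problem.

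The dimension-lifting step also rests on a false geometric premise. Once a rigid scaffold is placed in $\oR^k$, the admissible positions for a logic vertex attached to scaffold vertices by distance constraints form an intersection of spheres, which is always a sphere of some dimension, a point, or empty --- in particular compact, and since a sphere of dimension $m$ affinely spans an $(m+1)$-flat, any such set contained in a $2$-flat is at most a circle. So you cannot ``confine every logic vertex to a common $2$-dimensional affine flat'' in the way described; what apex/scaffold constructions genuinely buy is confinement to a \emph{sphere}, where the gadget geometry changes. This sphere-versus-flat distinction is exactly the content of Lemma \ref{covariance} ($\gd(G,x)=\ed(\nabla G,\varphi(x))$), and it is why this paper recovers Saxe's theorem for $k\ge 3$ (Corollary \ref{corSaxe}) by passing through Gram representations and $3$-colorability (Theorem \ref{theomaink3}) rather than by flattening planar gadgets into $\oR^k$. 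Moreover, even granting some confinement, your converse direction fails as stated: a planar realization of the logic subgraph need not extend to $\oR^k$, because the attachment edges impose additional constraints (each logic vertex must lie at prescribed distances from scaffold vertices), which a generic planar realization violates. To repair the outline you would either need to redesign all gadgets for spherical geometry, or reduce from a problem natively compatible with the apex trick, as Saxe and the present paper do.
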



We now recall a well known connection between Euclidean and Gram realizations.  Given a graph $G=(V,E)$ and its suspension graph $\nabla G$, consider the one-to-one  map $\phi:  \oR^{V \cup E} \mapsto \oR^{E(\nabla G)}$,  which maps  $x\in \oR^{V \cup E}$ to
 $\varphi(x )=d \in \oR^{E(\nabla G)}$ defined  by
\begin{equation}\label{eqphixd}
d_{0i}= x_{ii}  \ (i\in [n]),\ \ \ d_{ij}=x_{ii}+x_{jj}-2x_{ij} \ (\{i,j\}\in E).
\end{equation}
  Then the vectors $u_1,\ldots,u_n\in\oR^k$ form a Gram representation of $x$ if and only if the vectors  $u_0=0,u_1,\ldots,u_n$ form a Euclidean representation of $d=\varphi(x)$ in $\oR^k$. This implies the following:
  
 \begin{lemma}\label{covariance}
Let $G=(V,E)$ be a graph and $x\in \EE(G)$. Then,  $$\gd(G,x)={\rm ed}(\nabla G,\varphi(x)).$$
 \end{lemma}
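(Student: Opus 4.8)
The plan is to establish the stated identity $\gd(G,x)=\ed(\nabla G,\varphi(x))$ by showing that the two families of representations are in exact correspondence, via the explicit bijection suggested by the observation immediately preceding the lemma. Concretely, I would prove both inequalities $\gd(G,x)\ge \ed(\nabla G,\varphi(x))$ and $\gd(G,x)\le \ed(\nabla G,\varphi(x))$ by converting representations in one space into representations in the other without changing the dimension $k$.

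First I would prove the easy direction, $\ed(\nabla G,\varphi(x))\le \gd(G,x)$. Suppose $k=\gd(G,x)$ and let $u_1,\dots,u_n\in\oR^k$ be a Gram representation of $x$, so that $u_i^\sfT u_j=x_{ij}$ for all $\{i,j\}\in E$ and $\|u_i\|^2=u_i^\sfT u_i=x_{ii}$ (recall that for $x\in\EE(G)$ the diagonal entries equal $1$, but the computation works verbatim for the general covariance version). Setting $u_0=0$ and invoking the definition of $\varphi$ in \eqref{eqphixd}, I compute $\|u_0-u_i\|^2=\|u_i\|^2=x_{ii}=d_{0i}$ for each $i\in[n]$, and $\|u_i-u_j\|^2=\|u_i\|^2+\|u_j\|^2-2u_i^\sfT u_j=x_{ii}+x_{jj}-2x_{ij}=d_{ij}$ for each $\{i,j\}\in E$. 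Since the edge set of $\nabla G$ is exactly $E\cup\{\{0,i\}:i\in[n]\}$, the vectors $u_0,u_1,\dots,u_n$ form a Euclidean representation of $d=\varphi(x)$ in $\oR^k$, so $\ed(\nabla G,\varphi(x))\le k$.

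For the reverse direction I would start from a Euclidean representation $p_0,p_1,\dots,p_n\in\oR^k$ of $d=\varphi(x)$ with $k=\ed(\nabla G,\varphi(x))$. The key normalizing step is that a Euclidean representation is invariant under translation, so I may assume without loss of generality that $p_0=0$. Then setting $u_i=p_i$ for $i\in[n]$, the apex constraints give $\|u_i\|^2=\|p_i-p_0\|^2=d_{0i}=x_{ii}$, and the edge constraints give $\|u_i\|^2+\|u_j\|^2-2u_i^\sfT u_j=\|u_i-u_j\|^2=d_{ij}=x_{ii}+x_{jj}-2x_{ij}$; combining these two facts yields $u_i^\sfT u_j=x_{ij}$ for every $\{i,j\}\in E$, so the $u_i$ form a Gram representation of $x$ in $\oR^k$ and $\gd(G,x)\le k$. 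Combining the two inequalities gives equality.

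The argument is essentially a routine unwinding of definitions, so there is no deep obstacle; the only point that requires a word of care is the translation-invariance normalization $p_0=0$ in the second direction, which is what makes the apex node encode the squared norms and thereby links the inner products to the squared distances. I would also note the hypothesis $x\in\EE(G)$ guarantees that the relevant representations exist (so neither side is vacuously $+\infty$), and that the polarization identity $u_i^\sfT u_j=\tfrac12(\|u_i\|^2+\|u_j\|^2-\|u_i-u_j\|^2)$ is exactly the algebraic content of \eqref{eqphixd} reversed.
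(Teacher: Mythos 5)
Your proof is correct and takes essentially the same approach as the paper, which obtains the lemma directly from the observation stated just before it: $u_1,\dots,u_n$ form a Gram representation of $x$ in $\oR^k$ if and only if $u_0=0,u_1,\dots,u_n$ form a Euclidean representation of $\varphi(x)$ in $\oR^k$. The only detail you add beyond the paper's implicit argument is the explicit translation normalization $p_0=0$, which is indeed the point that makes the correspondence a genuine equality of dimensions.
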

 As we will see in the next section, this connection will enable us to recover the above result of Saxe for the case $k\ge 3$ from results about the Gram dimension (cf. Corollary \ref{corSaxe}).
 
\ignore{
 Notice that the image $d=\phi(\bfO)$ of the zero  vector  under the  map $\phi$ satisfies   $d_{0i}=1$ for $ i \in [n]$ and $d_{ij}=2$ for $ \{i,j\} \in E$. Combining Lemma~\ref{covariance} with our complexity results concerning $\gd(G,x)$ from  Section~\ref{secEk} we recover    Saxe's complexity results for $k\ge 3$. 
 
 \begin{corollary} 
 For  fixed $k\ge 3$,  it is an $\NP$--hard problem to decide whether \ed$(G,d)~\le~k$, 
  already for $G=\nabla^{k-2}H$ where $H$ is planar, and for $d$ having all its entries   equal to 2 except $d_{0i}=1$ for one apex node $0$.
  
  For $k=2$,  it is NP-hard to decide whether \ed$(G,d)\le 2$,  already when $G$ is a wheel graph, i.e., the suspension of a circuit. 
 \end{corollary}
 
}

\section{Testing  membership in $\EE_k(G)$}\label{secEk}

 In this section we discuss the complexity of testing membership in the rank constrained elliptope $\EE_k(G)$. Specifically, for  fixed $k\ge 1$  we consider the following problem:

\medskip\noindent
 {\em  Given a graph $G=(V,E)$ and  $x\in \oQ^E$, decide whether $\gd(G,x)\le k$.}

\medskip\noindent
In the language of matrix completions this corresponds to deciding whether a rational $G$-partial matrix has a psd completion of rank at most $k$. 

For $k=1$, $x\in \EE_1(G)$ if and only if $x\in \{\pm 1\}^E$ corresponds to a cut of $G$, and it is an easy exercise that this can be decided in polynomial time.
In this section we show that the problem is $\NP$-hard  for any  $k\ge 2$. It turns out that we have to use different reductions for the cases  $k\ge 3$ and  $k=2$. 

\subsection{The case $k\ge 3$}\label{seck3}
First we consider the problem of testing membership in $\EE_k(G)$ when $k\ge 3$. 
We  show this  is an $\NP$-hard problem, already when  $G=\nabla ^{k-3}H$ is the  suspension   of a planar graph  $H$ and  $x=\bfO$, the all-zero vector.

The key idea is to relate the parameter $\gd(G,\bfO)$ to the  chromatic number  $\chi(G)$ (the minimum number of colors needed to color the nodes of $G$ in such a way that adjacent nodes receive distinct colors). It is easy to check that 
 \begin{equation}\label{eqcolor}
 \gd(G,\bfO)\le \chi(G),
 \end{equation}
 with equality if $\chi(G)\le 2$ (i.e., if $G$ is a bipartite graph).
For $k \ge 3$  the inequality (\ref{eqcolor}) can  be strict. This is the case, e.g., for 
orthogonality graphs of  Kochen-Specker sets (see \cite{ovc10}). 

However, Peeters \cite[Theorem 3.1]{P96} gives a polynomial time reduction of  the problem of deciding 3-colorability of a graph to that of deciding  $\gd(G,\bfO)\le 3$.
Namely, given a graph $G$, he   constructs (in polynomial time) a new graph $G'$  having the property that
\begin{equation}\label{eqG}
\chi(G)\le 3  \Longleftrightarrow \chi(G')\le 3 \Longleftrightarrow \gd(G',\bfO)\le 3.
\end{equation}
The graph $G'$ is obtained from $G$ by adding for each pair of distinct nodes $i,j\in V$   the gadget graph $H_{ij}$  shown in Figure~\ref{gadget}.
\begin{figure}[h] 
\centering \includegraphics[scale=0.5]{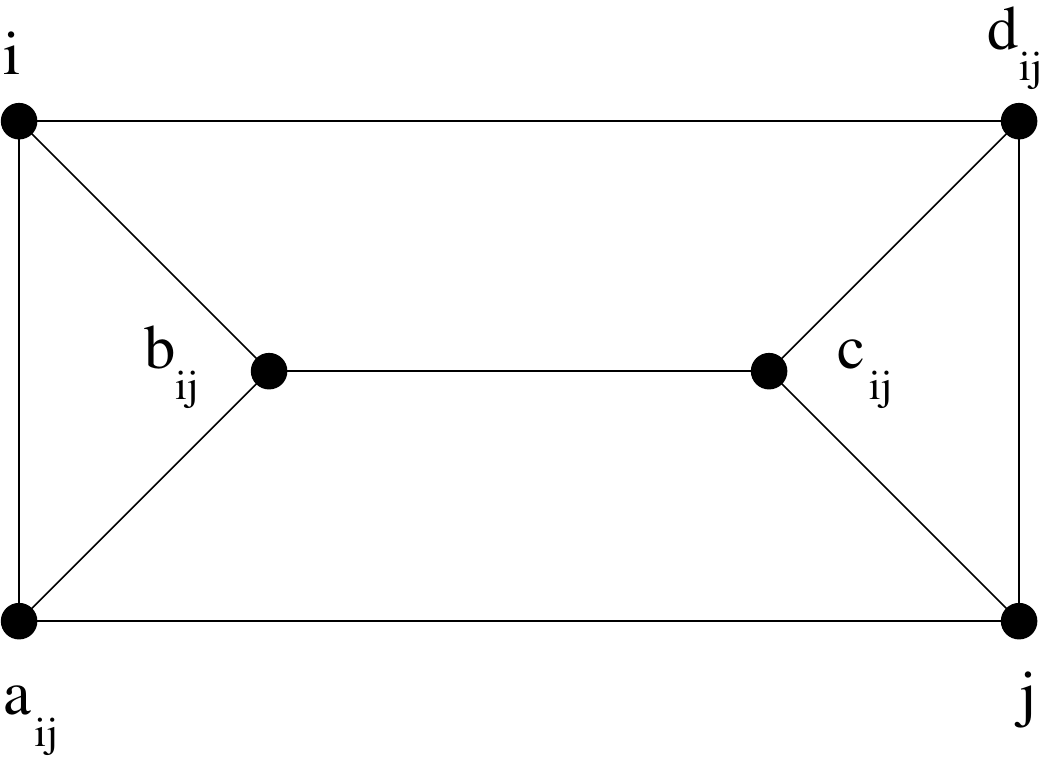}
\caption{The gadget graph $H_{ij}.$} 
\label{gadget} 
\end{figure}
Moreover, using a more involved construction,   Peeters \cite{P95} constructs (in polynomial time) from any  graph  $G$   a new  {\em planar} graph $G'$ satisfying (\ref{eqG}).
As the  problem of deciding whether a given planar graph is 3-colorable is  $\NP$-complete  (see \cite{S73})  we have   the following result.

\begin{theorem}\cite{P95}\label{thm:peeters}
It is $\NP$-hard to decide  whether $\gd(G,\bfO)\le 3$,   already  for the class of planar graphs.
\end{theorem}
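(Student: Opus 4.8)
The plan is to exhibit a polynomial-time reduction from the $\NP$-complete problem of deciding $3$-colorability of planar graphs (Stockmeyer \cite{S73}) to the problem of deciding $\gd(G,\bfO)\le 3$, keeping the instance planar at every stage. The guiding fact is the inequality $\gd(G,\bfO)\le\chi(G)$ recorded in (\ref{eqcolor}): a proper $k$-coloring assigns to each vertex one of $k$ fixed, pairwise orthogonal unit vectors (say the standard basis of $\oR^k$), and adjacent vertices, having distinct colors, then receive orthogonal vectors. In particular $\chi(G)\le 3$ always forces $\gd(G,\bfO)\le 3$, so the whole difficulty lies in engineering a reverse implication.

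First I would, for each unordered pair of distinct vertices $i,j$ of $G$, attach the gadget $H_{ij}$ of Figure~\ref{gadget} to obtain a new graph $G'$, and then verify the chain of equivalences (\ref{eqG}). Two of the three links are routine: $\chi(G')\le 3\Rightarrow\gd(G',\bfO)\le 3$ is immediate from (\ref{eqcolor}), and $\chi(G)\le 3\iff\chi(G')\le 3$ is combinatorial, amounting to checking that $H_{ij}$ admits a proper $3$-coloring for every prescribed pair of colors at its terminals $i,j$, so that inserting the gadgets neither destroys nor newly creates $3$-colorability of the underlying graph.

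The crux is the remaining implication $\gd(G',\bfO)\le 3\Rightarrow\chi(G')\le 3$. Given a Gram representation of $\bfO$, i.e. unit vectors $v_1,\dots,v_n\in\oR^3$ with $v_i^\sfT v_j=0$ on every edge, the role of each gadget $H_{ij}$ is to force $v_i$ and $v_j$ to be either parallel or orthogonal. Since this holds simultaneously for every pair $\{i,j\}$, the vectors attached to the original vertices of $G$ must occupy at most three mutually orthogonal lines of $\oR^3$; coloring each vertex by the line carrying its vector then yields a proper $3$-coloring of $G$, because adjacent vertices sit on orthogonal — hence distinct — lines. The hard part will be establishing that $H_{ij}$ genuinely enforces this parallel-or-orthogonal dichotomy, that is, that orthogonality constraints in $\oR^3$ are rigid enough to propagate through the gadget; this geometric rigidity is the heart of the argument, and it is exactly what makes the $k=3$ case nontrivial (the inequality (\ref{eqcolor}) is in general strict for $k\ge 3$).

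Finally I would invoke the refined construction of Peeters \cite{P95}, which realizes, in polynomial time, a graph $G'$ that satisfies (\ref{eqG}) and is moreover \emph{planar}; combining this planar reduction with the $\NP$-completeness of planar $3$-colorability then yields that deciding $\gd(G,\bfO)\le 3$ is $\NP$-hard already on planar graphs. Beyond the gadget-rigidity step above, the second main obstacle is the planarity requirement: one must certify the orthogonality-rigidity property while simultaneously arranging all the pairwise gadgets without introducing crossings, which is precisely why \cite{P95} needs a construction substantially more elaborate than the plain non-planar gadget of \cite{P96}.
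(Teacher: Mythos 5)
Your proposal follows essentially the same route as the paper: reduce from planar 3-colorability (Stockmeyer \cite{S73}) using Peeters' gadget construction \cite{P96} establishing the chain (\ref{eqG}) via the parallel-or-orthogonal rigidity of $H_{ij}$, and then invoke the planar refinement of \cite{P95} to keep the instance planar. The paper likewise defers the gadget rigidity and the planar construction to Peeters' work, so your treatment matches it in both structure and level of detail.
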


This hardness result    can be extended  to any fixed $k \ge 3$ using the suspension operation on graphs. 
The {\em suspension}  graph $\nabla^pG$ is  obtained from $G$ by adding $p$ new nodes that are pairwise adjacent and that are adjacent to all the nodes of $G$.  It is easy observation  that
\begin{equation}\label{eq:1}
\gd(\nabla^p G,\bfO)=\gd(G,\bfO)+p.
\end{equation}
Theorem~\ref{thm:peeters} combined with equation~(\ref{eq:1}) implies:

\begin{theorem} \label{theomaink3}
Fix  $k \ge 3$.  It is $\NP$-hard to decide whether $\gd(G,\bfO)\le k$, already for graphs of the form $G=\nabla^{k-3}H$ where $H$ is a planar graph.
\end{theorem}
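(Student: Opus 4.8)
The plan is to set up a polynomial-time many-one reduction from the problem shown $\NP$-hard in Theorem~\ref{thm:peeters} --- deciding, for a planar graph $H$, whether $\gd(H,\bfO)\le 3$ --- to the problem of deciding whether $\gd(G,\bfO)\le k$ for $G$ of the prescribed form. Since $k$ is a fixed constant, given any planar $H$ I would construct $G=\nabla^{k-3}H$ by adjoining $k-3$ new apex nodes (pairwise adjacent, each adjacent to every node of $H$); this adds only a constant number of nodes and $O(|V(H)|)$ edges, so $G$ is computable from $H$ in polynomial time and is of the required shape $\nabla^{k-3}H$ with $H$ planar. The input vector on both sides is the all-zero vector $\bfO$, so no numerical data need be transformed.

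Correctness would then follow immediately from the dimension identity~(\ref{eq:1}): applying it with $p=k-3$ gives $\gd(\nabla^{k-3}H,\bfO)=\gd(H,\bfO)+(k-3)$, whence
\[
\gd(G,\bfO)\le k \ \Longleftrightarrow\ \gd(H,\bfO)\le 3.
\]
Thus a hypothetical polynomial-time decision procedure for the target problem would solve the instances of Theorem~\ref{thm:peeters} in polynomial time, establishing $\NP$-hardness.

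The only genuine mathematical content sits in~(\ref{eq:1}), which the excerpt records as an easy observation; were I to justify it, the crux is an orthogonality argument. For the inequality $\gd(\nabla^pG,\bfO)\le \gd(G,\bfO)+p$, I would extend an optimal unit-vector representation of $G$ living in $\oR^{\gd(G,\bfO)}$ by assigning the $p$ fresh standard basis vectors of a complementary space to the apex nodes; these are automatically unit, pairwise orthogonal, and orthogonal to the image of $G$. For the reverse inequality, in any representation of $\nabla^pG$ the apex vectors are unit and pairwise orthogonal (as the apex nodes are mutually adjacent), so they span a $p$-dimensional subspace $W$; every vector representing a node of $G$ is orthogonal to all apex vectors and hence lies in the orthogonal complement $W^\perp$, whose dimension is $p$ less than that of the ambient space, exhibiting a representation of $G$ of correspondingly smaller dimension. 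Since both ingredients --- Theorem~\ref{thm:peeters} and~(\ref{eq:1}) --- are already in hand, I do not anticipate a serious obstacle; the single point demanding care is confirming that the reduction preserves the promised graph class and runs in polynomial time, which is immediate precisely because $k$, and hence $k-3$, is fixed.
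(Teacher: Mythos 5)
Your proof is correct and follows essentially the same route as the paper, which obtains the theorem by combining Theorem~\ref{thm:peeters} with the suspension identity~(\ref{eq:1}) exactly as you do. Your orthogonality argument for~(\ref{eq:1}) (apex vectors are pairwise orthogonal unit vectors, and the remaining vectors lie in their orthogonal complement) is sound and simply fills in what the paper dismisses as an easy observation.
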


 
 As an application we can recover the complexity result of Saxe from Theorem \ref{theoSaxe} for the case $k\ge 3$.
 
 \begin{corollary} \label{corSaxe}
 For  fixed $k\ge 3$,  it is an $\NP$--hard problem to decide whether \ed$(G,d)~\le~k$, 
  already when $G=\nabla^{k-2}H$ with $H$  planar and $d$ is $\{1,2\}$-valued (more precisely, all edges adjacent to a given apex node have weight 1 and all other edges have weight 2).
\end{corollary}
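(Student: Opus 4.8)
The plan is to derive this statement directly from the covariance identity of Lemma~\ref{covariance} combined with the hardness result of Theorem~\ref{theomaink3}: the point is that the map $\varphi$ transports each hard Gram-dimension instance $(\nabla^{k-3}H,\bfO)$ into a Euclidean-distance instance whose underlying graph and whose weight vector are exactly of the required shape. So there is nothing to prove beyond a careful transcription; the reduction is essentially immediate.

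First I would observe that Lemma~\ref{covariance} indeed applies to $x=\bfO$. For any graph $G'$ we have $\bfO\in\EE(G')$, since the identity matrix is a positive semidefinite completion of the corresponding $G'$-partial matrix (its diagonal is all ones and its off-diagonal entries vanish), so its projection onto $\oR^{E(G')}$ is $\bfO$. Hence Lemma~\ref{covariance} gives
$$\gd(G',\bfO)=\ed(\nabla G',\varphi(\bfO)).$$
Next I would compute $\varphi(\bfO)$ explicitly. Viewing $\bfO$ as the element of $\oR^{V\cup E}$ with $x_{ii}=1$ for all $i\in V$ and $x_{ij}=0$ for all $\{i,j\}\in E$, formula (\ref{eqphixd}) yields $d_{0i}=x_{ii}=1$ for every node $i$ and $d_{ij}=x_{ii}+x_{jj}-2x_{ij}=2$ for every edge $\{i,j\}\in E$. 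Thus $d=\varphi(\bfO)$ is $\{1,2\}$-valued, with every edge incident to the newly adjoined apex node $0$ of weight $1$ and every other edge of weight $2$, precisely the pattern claimed in the statement.

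Finally I would take the hard instances supplied by Theorem~\ref{theomaink3}, namely $G'=\nabla^{k-3}H$ with $H$ planar, for which deciding $\gd(G',\bfO)\le k$ is $\NP$-hard. Adjoining one further apex node turns $G'$ into $\nabla G'=\nabla^{k-2}H$, since the added node is adjacent to the $k-3$ existing (pairwise adjacent) apices and to all of $H$, so the $k-2$ apices become pairwise adjacent and each adjacent to all of $H$. Combining the two displays above yields
$$\gd(\nabla^{k-3}H,\bfO)\le k\ \Longleftrightarrow\ \ed(\nabla^{k-2}H,\varphi(\bfO))\le k,$$
and since $G'$, its suspension $\nabla G'$, and the vector $\varphi(\bfO)$ are all constructed in polynomial time, this is a polynomial reduction establishing $\NP$-hardness of deciding $\ed(\nabla^{k-2}H,d)\le k$ for the stated $\{1,2\}$-valued $d$.

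Since the whole argument is a translation through $\varphi$, there is no genuine obstacle here; the only points demanding care are the bookkeeping of the suspension operation (verifying $\nabla(\nabla^{k-3}H)=\nabla^{k-2}H$) and checking that $\varphi(\bfO)$ realizes exactly the partition of weights into values $1$ and $2$, so that the produced instances simultaneously fall in the asserted graph class and respect the weight restriction of Saxe's theorem.
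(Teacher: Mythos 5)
Your proposal is correct and follows exactly the paper's own argument: apply Lemma~\ref{covariance} to the instances $(\nabla^{k-3}H,\bfO)$ from Theorem~\ref{theomaink3}, compute $\varphi(\bfO)$ via (\ref{eqphixd}) to get the stated $\{1,2\}$-valued weights, and note $\nabla(\nabla^{k-3}H)=\nabla^{k-2}H$. The extra details you supply (checking $\bfO\in\EE(G')$ via the identity-matrix completion, and the polynomiality of the reduction) are points the paper leaves implicit, but the route is the same.
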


\begin{proof}
This follows directly from Lemma \ref{covariance} combined with Theorem \ref{theomaink3}: By Lemma \ref{covariance},
$\gd(\nabla^{k-3}H,\bfO)=\ed(\nabla^{k-2}H, \varphi(\bfO))$ and observe that  the image $d=\varphi(\bfO)$ of the zero vector under the map $\varphi$ from (\ref{eqphixd}) satisfies: $d_{0i}=1$ and $d_{ij}=2$ for all nodes $i,j$ of $\nabla^{k-3}H$.
\qed\end{proof}

\subsection{The case $k=2$}\label{seck2}
In this section we show $\NP$-hardness of testing membership in $\EE_2(G)$. 
Our strategy to show this result is as follows: Given a graph $G=(V,E)$ with edge weights $d\in \oR^E_+$, define the new edge weights $x=\cos (d)\in \oR^E$.
We show a close relationship between the two problems of testing whether $\ed(G,d)\le 1$, and  whether $\gd(G,x)\le 2$ (or, equivalently, $x\in \EE_2(G)$).  More precisely, we show that each of these two properties can be characterized in terms of the existence of a $\pm 1$-signing of the edges of $G$ satisfying a suitable `flow conservation' type property; moreover, both are equivalent when the edge weights $d$ are small enough.

As a motivation, let us consider first the case when $G=C_n$ is a circuit of length $n$.
Say, weight $d_i$ (resp., $x_i=\cos d_i$) is assigned to the edge $(i,i+1)$ for $i\in [n]$ (setting $n+1=1$).
Then the following property holds:
\begin{equation}\label{eqed1}
\ed(C_n,d)\le 1 \Longleftrightarrow \exists \epsilon \in \{\pm 1\}^n \ \text{ such that }  \epsilon^\sfT d=0.
\end{equation}
This is the key fact used by   Saxe \cite{Saxe} for showing  $\NP$-hardness of the problem of testing $\ed(C_n,d)\le 1$ 
by reducing it from the Partition problem for   $d=(d_1,\cdots,d_n)\in \oZ^n_+$. 
Lemma \ref{lemE2Cn} shows the analogous result for the Gram dimension:
\begin{equation}\label{eqgd2}
\gd(C_n,\cos d)\le 2 \Longleftrightarrow \exists \epsilon\in \{\pm 1\}^n \ \text{ such that } \epsilon^\sfT d\in 2\pi\oZ.
\end{equation}
We now observe that these two characterizations extend for an arbitrary 
graph $G$. 
To formulate the result we need to fix an (arbitrary) orientation $\tG$ of $G$.
Let  $P=(u_0,u_1,\cdots, u_{k-1},u_k)$ be a walk in $G$, i.e., 
 $\{u_i,u_{i+1}\}\in E$ for all $i\le k-1$. 
For $\epsilon\in \{\pm 1\}^E$, we define the following  weighted sum along the edges of $P$:
\begin{equation}\label{eqphi}
\phi_{d,\epsilon}(P)=\sum_{i=0}^{k-1} d_{u_i,u_{i+1}} \epsilon_{u_{i}u_{i+1}} \eta_i,
\end{equation}
setting $\eta_i=1$ if the edge $\{u_i,u_{i+1}\}$ is oriented in $\tG$  from $u_i$ to $u_{i+1}$ and $\eta_i=-1$ otherwise.

\begin{lemma}\label{lemed1}
Consider a graph $G=(V,E)$ with edge weights $d\in\oR^E_+$ and fix an orientation $\tG$ of $G$.
The following assertions are equivalent.
\begin{description}
\item[(i)]
$\ed(G,d)\le 1.$
\item[(ii)]
There exists an edge-signing $\epsilon \in \{\pm 1\}^E$ for which the function $\phi_{d,\epsilon}$ from (\ref{eqphi}) satisfies:
$\phi_{d,\epsilon}(C)=0$ for all circuits $C$ of $G$.
\end{description}
\end{lemma}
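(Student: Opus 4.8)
The plan is to characterize $\ed(G,d)\le 1$ directly in terms of coordinates on a line and then recognize the resulting constraints as a statement about signed sums around circuits. A Euclidean representation in $\oR^1$ assigns a real number $p_i$ to each node $i$ so that $(p_i-p_j)^2=d_{ij}$ for every edge $\{i,j\}$, which is equivalent to $p_i-p_j=\pm\sqrt{d_{ij}}$ on each edge. The natural idea is to introduce, for each edge $\{i,j\}\in E$, a sign recording which endpoint is larger, so that the edge length $\sqrt{d_{ij}}$ enters the difference $p_i-p_j$ with a prescribed sign. The orientation $\tG$ and the bookkeeping factor $\eta_i$ in (\ref{eqphi}) are exactly the device for converting ``$p_i-p_j=\pm\sqrt{d_{ij}}$'' into a linear relation in a consistently oriented framework.

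Concretely, I would argue as follows. For the implication \textbf{(i)}$\Rightarrow$\textbf{(ii)}: suppose $p_1,\ldots,p_n\in\oR$ realize $d$. For each edge $e=\{u,v\}$ oriented in $\tG$ from $u$ to $v$, define $\epsilon_e\in\{\pm1\}$ by $p_v-p_u=\epsilon_e\sqrt{d_e}$; note $\sqrt{d_e}$ is the length appearing, but the statement of the lemma uses weights $d$ (not $\sqrt d$) in $\phi_{d,\epsilon}$, so here I must be careful and match the paper's convention---I would set things up so that the quantity summed telescopes. The point is that along any walk $P=(u_0,\ldots,u_k)$, the signed contribution of edge $\{u_i,u_{i+1}\}$, namely $d_{u_iu_{i+1}}\epsilon_{u_iu_{i+1}}\eta_i$, should equal the \emph{coordinate difference} $p_{u_{i+1}}-p_{u_i}$ up to the chosen normalization, so that $\phi_{d,\epsilon}(P)=p_{u_k}-p_{u_0}$. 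For a circuit $C$ we have $u_k=u_0$, hence $\phi_{d,\epsilon}(C)=0$, giving \textbf{(ii)}.

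For the converse \textbf{(ii)}$\Rightarrow$\textbf{(i)}: given a signing $\epsilon$ with $\phi_{d,\epsilon}(C)=0$ for all circuits $C$, I would construct the coordinates $p_i$ by fixing a spanning tree (one per connected component), setting $p_r=0$ at a root $r$, and defining $p_i=\phi_{d,\epsilon}(P_i)$ where $P_i$ is the unique tree-path from $r$ to $i$. The tree-edges then automatically satisfy the required difference relation. The content is to verify that the non-tree edges are also satisfied: for a non-tree edge $e$, adding $e$ to the tree creates a unique circuit $C_e$, and the hypothesis $\phi_{d,\epsilon}(C_e)=0$ forces the coordinate difference across $e$ to agree with the prescribed signed length, so $(p_i-p_j)^2=d_{ij}$ holds on $e$ as well. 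This shows the realization is valid, giving \textbf{(i)}. The reduction from all circuits to the fundamental circuits of a spanning tree is the standard cycle-space argument, which I would invoke to keep the verification finite.

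The main obstacle I anticipate is purely notational rather than conceptual: reconciling the weights $d$ used inside $\phi_{d,\epsilon}$ with the square-root lengths $\sqrt{d_e}$ that genuinely appear in a one-dimensional realization. The cleanest route is to observe that ``flow conservation'' is homogeneous and scale-invariant edge by edge, so whether one telescopes $\sqrt{d_e}$ or an equivalent normalized quantity, the vanishing-on-circuits condition is the same; I would state the correspondence so that the signed edge lengths telescope exactly to coordinate differences. Beyond that, the only care needed is handling disconnected graphs (treat each component independently) and ensuring the orientation/$\eta_i$ conventions are applied consistently when the walk traverses an edge against its orientation, which is precisely what the $\eta_i$ factor in (\ref{eqphi}) is designed to absorb.
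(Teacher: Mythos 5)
Your proof skeleton is the same as the paper's: for (i)$\Rightarrow$(ii) you extract an edge-signing from a realization on the line and telescope along circuits, and for (ii)$\Rightarrow$(i) you define a potential $f$ from a root via path sums and check the remaining edges against the circuit condition (the paper does this with arbitrary walks and a well-definedness argument; your spanning-tree/fundamental-circuit variant is the same cycle-space argument). If that were all, this would be a match.

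However, the way you dispose of the $d$-versus-$\sqrt{d}$ mismatch is a genuine error, not a notational one. You assert that since flow conservation is ``homogeneous and scale-invariant edge by edge,'' the condition ``there exists $\epsilon$ with all signed circuit sums of $\sqrt{d_e}$ equal to $0$'' is the same as the condition with $d_e$ itself. That equivalence is false: vanishing of signed circuit sums is not preserved under the edge-wise substitution $c_e\mapsto c_e^2$. Take $K_3$ with $d=(1,1,4)$: the points $0,1,2\in\oR$ realize these values as \emph{squared} distances, and indeed $1+1-2=0$ for $\sqrt{d}=(1,1,2)$, but $\pm1\pm1\pm4$ is never $0$; so under the literal squared-distance reading of Definition \ref{defedm}, assertion (i) holds while assertion (ii) fails. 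In other words, under that reading the statement you are proving is itself false, so no argument along your lines can close the gap. What actually resolves the issue is a convention choice, which the paper makes silently: its proof takes $\ed(G,d)\le 1$ to mean there is $f:V\to\oR$ with $|f(u)-f(v)|=d_{uv}$, i.e., the weights $d$ are treated as lengths, not squared lengths. This linear convention is also what the later applications force: Corollary \ref{coredgd} needs Lemma \ref{lemed1} and Lemma \ref{lemgd2} to be phrased in terms of the \emph{same} function $\phi_{d,\epsilon}$, and the Partition relation (\ref{eqed1}) is linear in $d$. Once that convention is fixed, no square roots appear anywhere, and your telescoping and spanning-tree arguments go through verbatim; but the scale-invariance claim as you wrote it must be deleted, since it is exactly the point where a wrong statement would enter the proof.
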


\begin{proof}
Assume  that (i) holds. Let $f:V \rightarrow \oR$ satisfying $|f(u)-f(v)|=d_{uv}$ for all  $\{u,v\}\in E$.
If the edge $\{u,v\}$ is oriented from $u$ to $v$ in $\tG$, let $\epsilon_{uv}\in \{\pm 1\}$ such that 
$f(v)-f(u)= d_{uv}\epsilon_{uv}.$
This defines an edge-signing $\epsilon \in \{\pm 1\}^E$; we claim that (ii) holds for this edge-signing. For this, pick a circuit  $C=(u_0,u_1,\cdots,u_k=u_0)$ in $G$. By construction of the edge-signing,  the term $\epsilon_{u_iu_{i+1}} d_{u_iu_{i+1}}\eta_i$ is equal to $f(u_{i+1})-f(u_i)$ for all $i$.
This implies that $\phi_{d,\epsilon}(C)= \sum_{i=0}^{k-1} f(u_{i+1})-f(u_i)=0$ and thus (ii) holds.
Conversely, assume (ii) holds. We may assume  that $G$ is connected (else apply the following to each connected component).
Fix an arbitrary node $u_0\in V$. We define the function $f:V\rightarrow \oR$ by setting $f(u_0)=0$ and, for $u\in V\setminus \{u_0\}$, $f(u)=\phi_{d,\epsilon}(P)$ where $P$ is a walk from $u_0$ to $u$. It is easy to verify that since (ii) holds this definition does not depend on the choice of $P$.
We claim that $f$ is a Euclidean embedding of $(G,d)$ into $\oR$.
For this, pick an edge $\{u,v\}\in E$; say, it is oriented from $u$ to $v$ in $\tG$. 
Pick a walk $P$ from $u_0$ to $u$, so that $Q=(P,v)$ is a walk from $u_0$ to $v$. Then,  $f(u)=\phi_{d,\epsilon}(P)$, $f(v)=\phi_{d,\epsilon}(Q)=
\phi_{d,\epsilon}(P)+ d_{uv}\epsilon_{uv}=f(u)+d_{uv}\epsilon_{uv}$, which implies that
$|f(v)-f(u)|=d_{uv}$.
\qed
\end{proof}

\begin{lemma}\label{lemgd2}
Consider a graph $G=(V,E)$ with edge weights $d\in\oR^E_+$  and fix an orientation $\tG$ of $G$.
The following assertions are equivalent.
\begin{description}
\item[(i)]
$\gd(G,\cos d)\le 2.$
\item[(ii)]
There exists an edge-signing $\epsilon \in \{\pm 1\}^E$ for which the function $\phi_{d,\epsilon}$ from (\ref{eqphi}) satisfies:
$\phi_{d,\epsilon}(C)\in 2\pi\oZ$ for all circuits $C$ of $G$.
\end{description}
\end{lemma}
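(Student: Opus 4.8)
The plan is to mimic the proof of Lemma~\ref{lemed1} very closely, replacing the real-valued potential function by an angle-valued one and exact equalities by equalities modulo $2\pi$. The underlying geometric picture is that a Gram representation in $\oR^2$ by unit vectors is precisely an assignment of angles $\theta(u)\in\oR/2\pi\oZ$ to the nodes, so that $u_i^\sfT u_j=\cos(\theta(u_i)-\theta(u_j))$; the constraint $u_i^\sfT u_j=\cos d_{ij}$ then says $\theta(u_i)-\theta(u_j)\equiv \pm d_{ij}\pmod{2\pi}$. This is the two-dimensional analogue of the one-dimensional embedding $f:V\to\oR$ with $|f(u)-f(v)|=d_{uv}$ used in Lemma~\ref{lemed1}, and the whole argument should go through with $\oR$ replaced by $\oR/2\pi\oZ$.

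For the direction (i)$\Rightarrow$(ii), I would start from a Gram representation $u_1,\dots,u_n\in\oR^2$ with $u_i^\sfT u_j=\cos d_{ij}$ for all edges. Each unit vector $u_i$ has an angle $\theta_i\in\oR$ (defined mod $2\pi$), and for each edge oriented from $u$ to $v$ in $\tG$ the relation $\cos(\theta_v-\theta_u)=\cos d_{uv}$ forces $\theta_v-\theta_u\equiv \epsilon_{uv}d_{uv}\pmod{2\pi}$ for some sign $\epsilon_{uv}\in\{\pm1\}$ (using $d_{uv}\in[0,1]\subset[0,\pi)$, so the two solutions of $\cos t=\cos d_{uv}$ are exactly $\pm d_{uv}$). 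This defines the edge-signing $\epsilon$. Then for any circuit $C=(u_0,\dots,u_k=u_0)$ the term $\epsilon_{u_iu_{i+1}}d_{u_iu_{i+1}}\eta_i$ equals $\theta_{u_{i+1}}-\theta_{u_i}\pmod{2\pi}$, and summing around the circuit gives $\phi_{d,\epsilon}(C)\equiv\sum_i(\theta_{u_{i+1}}-\theta_{u_i})\equiv 0\pmod{2\pi}$, which is exactly (ii).

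For the converse (i)$\Leftarrow$(ii), I would assume $G$ connected (handling components separately), fix a base node $u_0$, set $\theta(u_0)=0$, and define $\theta(u)=\phi_{d,\epsilon}(P)\bmod 2\pi$ for a walk $P$ from $u_0$ to $u$. The hypothesis that $\phi_{d,\epsilon}(C)\in 2\pi\oZ$ for every circuit is precisely what guarantees this is well-defined modulo $2\pi$ (two walks differ by a closed walk, i.e.\ a combination of circuits). Defining $u_i=(\cos\theta(i),\sin\theta(i))^\sfT$, the same edge-by-edge computation as in Lemma~\ref{lemed1} shows $\theta(v)-\theta(u)\equiv\epsilon_{uv}d_{uv}\pmod{2\pi}$ along each edge, whence $u_u^\sfT u_v=\cos(\theta(u)-\theta(v))=\cos d_{uv}$, giving a Gram representation in $\oR^2$ and thus $\gd(G,\cos d)\le 2$.

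The main subtlety, and the only place the argument genuinely differs from Lemma~\ref{lemed1}, is the well-definedness modulo $2\pi$ in the converse: I must check that the closed-walk condition reduces to the circuit condition (every closed walk is an edge-disjoint-in-homology sum of circuits, so $\phi_{d,\epsilon}$ of any closed walk lands in $2\pi\oZ$), and that two walks $P,P'$ from $u_0$ to $u$ satisfy $\phi_{d,\epsilon}(P)\equiv\phi_{d,\epsilon}(P')\pmod{2\pi}$ because $P$ followed by the reverse of $P'$ is a closed walk. I would also note the minor point that reversing an edge negates its $\eta_i$ contribution, so $\phi_{d,\epsilon}$ is consistently additive and sign-reversing under walk reversal. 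Everything else is routine and parallels the previous lemma verbatim; no separate hard computation is required.
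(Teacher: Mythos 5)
Your proposal is correct and follows essentially the same route as the paper's proof: both directions parametrize the unit vectors in $\oR^2$ by angles and transplant the potential-function argument of Lemma~\ref{lemed1}, with exact equalities replaced by congruences modulo $2\pi$; the paper is merely terser, deferring to that earlier proof for the well-definedness details you spell out. One cosmetic remark: your parenthetical appeal to $d_{uv}\in[0,1]$ is neither assumed (the lemma allows any $d\in\oR^E_+$) nor needed, since $\cos s=\cos t$ already forces $s\equiv\pm t\pmod{2\pi}$ for arbitrary reals.
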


\begin{proof}
Assume (i) holds. That is, there exists a labeling of the nodes $u\in V$ by unit vectors $g(u)=(\cos f(u), \sin f(u))$ where $f(u)\in [0,2\pi]$.
For any edge $\{u,v\}\in E$, we have:
$\cos d_{uv}= g(u)^\sfT g(v) = \cos(f(u)-f(v))$.
If $\{u,v\}$ is oriented from $u$ to $v$, define $\epsilon \in \{\pm 1\}$ such that $f(v)-f(u)- \epsilon_{uv}d_{uv} \in 2\pi \oZ$.
This defines an edge-signing $\epsilon\in \{\pm 1\}^E$ which satisfies (ii) (same argument  as in the proof of  Lemma \ref{lemed1}).

Conversely, assume (ii) holds. Analogously to the proof of Lemma \ref{lemed1}, fix a node $u_0\in V$ and consider  the unit vectors  $g(u_0)=(1,0)$ and 
$g(u)= (\cos (\phi_{d,\epsilon}(P_u)),\sin (\phi_{d,\epsilon}(P_u)))$, where $P_u$ is a walk from  $u_0\in V$ to $u\in V\setminus\{u_0\}$; one can verify  that these vectors   form a Gram realization of $(G,\cos d)$.
\qed\end{proof}

\begin{corollary}\label{coredgd}
Consider a graph $G=(V,E)$ with edge weights $d\in\oR^E_+$ satisfying $\sum_{e\in E} d_e<2\pi$. Then, $\ed(G,d)\le 1$ $\Longleftrightarrow $ $\gd(G,\cos d)\le 2$.
\end{corollary}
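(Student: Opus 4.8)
The plan is to read the equivalence off directly from the two characterizations already proved in Lemma~\ref{lemed1} and Lemma~\ref{lemgd2}. Both lemmas express the property of interest as the existence of an edge-signing $\epsilon\in\{\pm 1\}^E$ controlling the same functional $\phi_{d,\epsilon}$ along the circuits of $G$: the condition $\ed(G,d)\le 1$ is that $\phi_{d,\epsilon}(C)=0$ for all circuits $C$, while $\gd(G,\cos d)\le 2$ is the a priori weaker condition $\phi_{d,\epsilon}(C)\in 2\pi\oZ$ for all circuits $C$. The key structural remark is that these are conditions on \emph{the same} signed sum, so the first is simply the special case of the second in which every integer multiple is forced to vanish; the whole argument consists in showing that the hypothesis $\sum_{e\in E}d_e<2\pi$ forces exactly this collapse.

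For the forward implication I would argue as follows, and note that it needs no hypothesis on the size of the weights. If $\ed(G,d)\le 1$, then Lemma~\ref{lemed1} supplies an edge-signing $\epsilon$ with $\phi_{d,\epsilon}(C)=0$ for every circuit $C$. Since $0\in 2\pi\oZ$, this very same $\epsilon$ satisfies condition (ii) of Lemma~\ref{lemgd2}, and hence $\gd(G,\cos d)\le 2$.

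The reverse implication is where the assumption $\sum_{e\in E}d_e<2\pi$ is used. Assume $\gd(G,\cos d)\le 2$; by Lemma~\ref{lemgd2} there is an edge-signing $\epsilon$ with $\phi_{d,\epsilon}(C)\in 2\pi\oZ$ for all circuits $C$. I would then bound the magnitude of each such value by the triangle inequality: a circuit uses each edge of $G$ at most once and all $d_e\ge 0$, so
$$|\phi_{d,\epsilon}(C)|\le \sum_{i} d_{u_iu_{i+1}}\le \sum_{e\in E}d_e<2\pi.$$
An element of $2\pi\oZ$ of absolute value strictly below $2\pi$ can only be $0$, so $\phi_{d,\epsilon}(C)=0$ for every circuit $C$. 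Feeding this same $\epsilon$ back into Lemma~\ref{lemed1} then gives $\ed(G,d)\le 1$, completing the equivalence. There is no genuine obstacle here; the argument is a counting observation rather than a construction, and the only point demanding care is the recognition that the edge-signing produced by Lemma~\ref{lemgd2} may be reused verbatim in Lemma~\ref{lemed1}—no re-signing or change of orientation $\tG$ is required once the weight bound annihilates each $2\pi\oZ$-value.
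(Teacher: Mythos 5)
Your proposal is correct and follows exactly the paper's own argument: the paper also observes that $|\phi_{d,\epsilon}(C)|\le \sum_{e\in E}d_e<2\pi$ forces every value in $2\pi\oZ$ to equal $0$, and then concludes by combining Lemmas~\ref{lemed1} and~\ref{lemgd2}. Your write-up merely makes explicit the (trivial) forward direction and the reuse of the same edge-signing, both of which the paper leaves implicit.
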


\begin{proof}
Note that if $C$ is a circuit of $G$, then $\phi_{d,\epsilon}(C)\in 2\pi\oZ$ implies $\phi_{d,\epsilon}(C)=0$,  since 
$|\phi_{d,\epsilon}(C)|\le \sum_{e\in E}d_e <2\pi$.
The result now follows directly by applying 
Lemmas \ref{lemed1} and \ref{lemgd2}.
\qed\end{proof}

We can now  show $\NP$-hardness of  testing membership in the rank constrained elliptope $\EE_2(G)$.
For this we use the result of Theorem \ref{theoSaxe} for the case $k=1$: Given edge weights $d\in \{1,2\}^E$, it is $\NP$-hard to decide whether $\ed(G,d)\le 1$.

\begin{theorem}\label{theogd2}
Given a graph $G=(V,E)$ and rational  edge weights $x\in \oQ^E$, it is $\NP$-hard to decide whether $x\in \EE_2(G)$ or, equivalently, $\gd(G,x)\le 2$.
\end{theorem}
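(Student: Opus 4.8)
The plan is to reduce from Saxe's result (Theorem~\ref{theoSaxe} with $k=1$): deciding whether $\ed(G,d)\le 1$ for $d\in\{1,2\}^E$ is $\NP$-hard. The obstacle in directly combining this with Corollary~\ref{coredgd} is the scaling hypothesis $\sum_{e\in E}d_e<2\pi$: a generic $\{1,2\}$-valued instance will have edge-weight sum far exceeding $2\pi$, so I cannot simply set $x=\cos d$ and invoke the corollary. The first step is therefore to rescale the weights. Given an instance $(G,d)$ with $d\in\{1,2\}^E$, I introduce a scaling factor $\lambda>0$ and consider the scaled weights $\lambda d$, with the aim of producing the rational point $x=\cos(\lambda d)\in\oQ^E$ and applying Corollary~\ref{coredgd} to $\lambda d$ in place of $d$.

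The key observations are that the two properties being related are scale-invariant in the right way. First, $\ed(G,d)\le 1$ if and only if $\ed(G,\lambda d)\le 1$ for any $\lambda>0$, since a Euclidean line embedding of $d$ scales linearly to one of $\lambda d$ and back. Second, I would choose $\lambda$ small enough that $\sum_{e\in E}\lambda d_e=\lambda\sum_{e\in E}d_e<2\pi$; since $d$ is $\{1,2\}$-valued, $\sum_{e\in E}d_e\le 2|E|$, so it suffices to take any rational $\lambda<\pi/|E|$, for instance $\lambda=\pi/(|E|+1)$ or a suitable rational approximation. With this choice, Corollary~\ref{coredgd} applied to the weights $\lambda d$ yields
\begin{equation*}
\ed(G,\lambda d)\le 1 \Longleftrightarrow \gd(G,\cos(\lambda d))\le 2.
\end{equation*}
Chaining the two equivalences gives $\ed(G,d)\le 1 \Longleftrightarrow \gd(G,\cos(\lambda d))\le 2$, which reduces the $\NP$-hard problem of Theorem~\ref{theoSaxe} (case $k=1$) to testing membership of $x=\cos(\lambda d)$ in $\EE_2(G)$.

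The remaining concern, which is where I expect the genuine subtlety to lie, is the requirement that the instance $x=\cos(\lambda d)$ be rational, as demanded by the statement: with $\lambda=\pi/(|E|+1)$ the entries $\cos(\lambda d_e)$ are transcendental and cannot be written down exactly. I see two routes. The cleaner one is to pick $\lambda$ so that each $\lambda d_e$ is a rational multiple of $\pi$ at which the cosine is rational: for $d_e\in\{1,2\}$ it would suffice that $\lambda\in\pi\oQ$ with $\cos(\lambda)$ and $\cos(2\lambda)$ both rational, for example taking $\lambda=\pi/3$ so that $\cos(\lambda)=1/2$ and $\cos(2\lambda)=-1/2$ — but this particular choice violates $\lambda\sum_e d_e<2\pi$ unless $|E|$ is tiny, so one must instead subdivide or otherwise engineer the construction so that a small admissible angle still lands at a rational cosine. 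The alternative route is to argue that the reduction is robust under rational perturbation: replace $\cos(\lambda d_e)$ by a sufficiently close rational $x_e$ and show that, because the distinction between $\gd=2$ and $\gd=3$ is governed by the strict/non-strict flow-conservation conditions of Lemma~\ref{lemgd2}, a fine enough rational approximation preserves the answer. I would expect the authors' actual proof to handle exactly this rationality issue, and it is the step I would flag as the main obstacle; everything else is the scale-invariance bookkeeping described above.
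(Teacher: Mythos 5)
Your reduction skeleton coincides with the paper's: reduce from Saxe's theorem with $k=1$, rescale the $\{1,2\}$-valued weights by a small factor so that Corollary~\ref{coredgd} applies, and use scale-invariance of $\ed(G,d)\le 1$. However, the step you yourself flag as ``the main obstacle'' --- producing a \emph{rational} instance $x$ in polynomial time --- is the actual content of the theorem, and neither of the two routes you sketch closes it. Route one (choose $\lambda\in\pi\oQ$ with $\cos\lambda,\cos2\lambda\in\oQ$) cannot work: by Niven's theorem, the only rational multiples of $\pi$ whose cosine is rational have cosine in $\{0,\pm\tfrac12,\pm1\}$, so the smallest such positive angle is $\pi/3$; there is no ``small admissible angle'' of this form, and no subdivision trick stated at this level of vagueness repairs that. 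Route two (rational perturbation) fails for a structural reason: by Lemma~\ref{lemgd2}, once the total weight is below $2\pi$, membership in $\EE_2(G)$ amounts to the existence of a signing $\epsilon$ with the \emph{exact equalities} $\phi_{d,\epsilon}(C)=0$ for all circuits $C$. These are equality (not strict-vs-nonstrict) conditions --- the YES set is a measure-zero subset of the data space --- so an arbitrarily small generic perturbation turns a YES instance into a NO instance; no ``fine enough rational approximation'' can be argued to preserve the answer.

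The missing idea, which is the paper's key trick, is to abandon rational multiples of $\pi$ altogether and take the scaling angle $\alpha$ to be a small angle whose \emph{sine and cosine are both rational}, i.e., a rational point on the unit circle close to $(1,0)$ (such points are dense). Concretely, with $D=\sum_{e\in E}d_e$ the paper sets
\begin{equation*}
\cos \alpha =\frac{16 D^2-1}{16 D^2+1}\in \oQ, \qquad \sin\alpha =\frac{8D}{16 D^2+1}\in \oQ,
\end{equation*}
so that $\sin\alpha<1/(2D)\le 0.5<\sin 1$, hence $\alpha<2\sin\alpha\le 1/D$ and $\sum_{e}\alpha d_e<1<2\pi$. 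Since $d_e\in\{1,2\}$, every entry $\cos(\alpha d_e)$ equals either $\cos\alpha$ or $\cos(2\alpha)=2\cos^2\alpha-1$, both rational and computable in time polynomial in the input size (note $D\le 2|E|$). With this choice your chain of equivalences $\ed(G,d)\le 1\Leftrightarrow\ed(G,\alpha d)\le 1\Leftrightarrow\gd(G,\cos(\alpha d))\le 2$ goes through verbatim and the reduction is complete. So: right strategy and correctly identified obstacle, but both of your candidate fixes for that obstacle fail, and the construction above is what is needed to bridge the gap.
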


\begin{proof}
Fix  edge weights $d\in \{1,2\}^E$. We  reduce the problem of testing whether $\ed(G,d)\le 1$ to the problem of testing whether $\gd(G,\cos (\alpha d))\le 2$, where $\alpha$ is  chosen in such a way that $\cos \alpha \in \oQ$ and $\alpha <1/(\sum_{e\in E}d_e)$. 
For this, set $D=\sum_{e\in E}d_e$ and define the angle $\alpha >0$ by
$$\cos \alpha ={16 D^2-1\over 16 D^2+1}\in \oQ, \ \ \sin\alpha ={8D\over 16 D^2+1}\in \oQ.$$
Then, $\sin\alpha <1/(2D) \le 0.5<\sin 1$, which implies that $\alpha <2\sin\alpha \le 1/D$ and thus $\alpha <1/D=1/(\sum_{e\in E}d_e)$.

As $d_e\in \{1,2\}$,     $\cos( \alpha d_e )\in\{\cos \alpha, \cos(2\alpha)=2\cos^2\alpha  -1 \}$ is rational valued for all edges $e\in E$.
As $\sum_{e\in E}\alpha d_e<1 <2\pi$,   Corollary \ref{coredgd} shows that 
$\gd(G,\cos(\alpha d))\le 2$ is equivalent to $\ed(G,\alpha d)\le 1$ and thus to $\ed(G,d)\le 1$. This concludes the proof.
\qed\end{proof}

We conclude with a remark about the complexity of  the Gram dimension of weighted circuits.

\begin{remark}
Consider the case when $G=C_n$ is a circuit and the edge weights $d\in \oZ^{C_n}_+$ are integer valued.
Relation (\ref{eqed1}) shows that  $\ed(C_n,d)\le 1$ if and only if the sequence $d=(d_1,\cdots,d_n)$ can be partitioned, thus showing $\NP$-hardness of the problem of testing $\ed(C_n,d)\le 1$.

As in the proof of Theorem \ref{theogd2} let us choose $\alpha $ such that $\cos \alpha,\sin \alpha \in \oQ$ and $\alpha<1/(\sum_{i=1}^nd_i)$; then $\cos(t\alpha)\in \oQ$ for all $t\in\oZ$. The
 analogous relation (\ref{eqgd2}) holds, which shows that $\gd(C_n,\cos (\alpha d))\le 2$ if and only if the sequence $d=(d_1,\cdots,d_n)$ can be partitioned. However,  it is not clear how to use this fact  in order  to show $\NP$-hardness of the problem of testing 
 $\gd(C_n,x)\le 2$.
Indeed, although all $\cos(\alpha d_i)$ are rational valued, the difficulty is  that it is  not clear how to compute $\cos (\alpha d_i)$ in time polynomial in the bit size of $d_i$ (while  it can be shown to be polynomial in $d_i$).

Finally we point out the following link to the construction of Aspnes et al. \cite[\S IV]{Aspnes}.
Consider the edge weights $x=\cos(\alpha d)\in\oR^{C_n}$ for the circuit $C_n$ and $y=\varphi(x)$ for its suspension $\nabla C_n$, which is the wheel graph $W_{n+1}$. Thus $y_{0i}=1$ and $y_{i, i+1}=2-2\cos (\alpha d_i)= 4 \sin^2(\alpha d_i/2)$ for all $i\in [n]$. Taking square roots we find the edge weights used in
 \cite{Aspnes} to claim $\NP$-hardness of realizing  weighted wheels (that have the property of admitting unique (up to congruence) realizations in the plane). 
 As explained above in the proof of Theorem \ref{theogd2}, if we suitably choose $\alpha$ we can make sure that 
 all $\sin (\alpha d_i/2)$ be rational valued, while \cite{Aspnes}  uses real numbers. However, it is not clear  how to control their bit sizes, and thus how to deduce $\NP$-hardness.
\end{remark}

\ignore{
\subsection{The case $k=2$ - Old section}\label{seck2}
We now  show $\NP$-hardness of testing membership in $\EE_2(G)$. We will derive this result  using  a reduction from  the partition problem: 

\medskip\noindent
 {\bf Partition problem:} {\em Given  positive integers $a_1,\ldots,a_n\in \oN$, decide whether they can be partitioned, i.e., whether
 $\sum_{i=1}^n\epsilon_ia_i=0$ for some $\epsilon\in \{\pm 1\}^n$.}

\medskip
This problem is well known to be $\NP $-complete~\cite{GJ79}. Our reduction is based on the characterization of the rank constrained elliptope 
$\EE_2(C_n)$ of a circuit from Proposition \ref{propE2Cn}. What remains to be done is to explain how to construct (in polynomial time) from an instance $a_1,\cdots,a_n\in \oN$ of the partition problem a {\em rational} vector $x$ having the property that $a_1,\cdots,a_n$ can  be 
partitioned if and only if  $x\in \EE_2(C_n)$. 
The next lemma shows how to do this.

\begin{lemma}\label{lem:reduction}Let  $a_1,\ldots, a_n\in \oN$ be an instance of the partition problem.
Then there exists  a number  $\alpha\in (0,\pi]$ with the following properties:
\begin{itemize}
\item[(i)] $\cos \al, \sin\al\in \oQ$.
\item[(ii)]  $\al<1/(\sum_{i=1}^n a_i)$. 
\item[(iii)] For each $i\in [n]$,  $\cos (a_i\al)\in \oQ$ 
and it  can be computed 
  in time polynomial in $\log a_i$.
\end{itemize}\end{lemma}
\begin{proof}
Set $A=\sum_{i=1}^na_i$ and define $\al \in (0,\pi]$ by
$$\cos \al ={16A^2-1\over 16A^2+1}, \qquad \sin\al ={8A\over 16A^2+1}.$$
Clearly,  $\cos\al,\sin\al\in \oQ$ and $\sin\al < 1/(2A)\le0.5<\sin1$. Therefore, $\al< 2\sin\al \le 1/A$, which implies  that 
$\al <1/(\sum_{i=1}^na_i)$. 

It remains to show that for all $n \in \oN$ one can compute $\cos (n\al)$ in time polynomial in $\log n$ in terms of $\cos\al$ and $\sin\al$.
To this end, we will use the Tchebycheff  polynomials $T_n$ of the first kind~\cite{AS}. Recall that  $T_n$ is a univariate polynomial (in variable $t$) satisfying  the recurrence relation:
\begin{align}
T_1(t)&=t, \ T_2(t)= 2t^2-1,\ \text{and }\\ 
T_{2^p}(t)&=T_{2^{p-1}}(T_2(t)) \ \text{ for } p\ge 1.\label{relrec}
\end{align}
Moreover, for every $x\in \oR$ and $n \in \oN$ we have that 
$T_n(\cos x) = \cos (nx)$.
This last property implies that $\cos (n\al)=T_n(\cos \al)\in \oQ$,  since all coefficients of the polynomial $T_n$ are integers.

As a direct application of (\ref{relrec}):
\begin{claim}
\label{lemcos1}
Given $\al\in\oR$, one can compute $\cos( 2^p\al)$ in terms of $\cos \al$ in time polynomial in $p$.
\end{claim}
One can prove a similar result for sinus as well:
\begin{claim}\label{lemsin1}
Given $\al\in \oR$, one can compute $\sin (2^p\al)$ in terms of $\cos \al$ and $\sin\al$ in time polynomial in $p$.
\end{claim}
\begin{proof}
Using the relation:
$\sin (2^p\al)= 2\sin (2^{p-1} \al )\cos(2^{p-1}\al)$,  we can derive that 
$$\sin(2^p\al)= 2^p \prod_{i=0}^{p-1} \cos(2^i\al) \sin \al.$$
Then conclude using Claim \ref{lemcos1}.
\qed\end{proof}

Finally, we need the following observation to finish the proof:
\begin{claim}\label{lemcos2}
Given $\al=(\al_1,\ldots, \al_k)\in\oR^k$ such that $\cos \al_i$, $\sin \al_i \in \oQ$ for all $i\in [k]$, 
 the quantities 
$$f_k(\al)= \cos(\al_1+\ldots +\al_k),\ \ g_k(\al)=\sin(\al_1+\ldots+ \al_k)$$
belong to $\oQ$ and they can be computed 
in time polynomial in $k$ in terms of $\cos \al_i,\sin\al_i$ ($i\in[k]$).
\end{claim}

\begin{proof}
Directly from  the following recurrence relations: 
$$f_{k+1}(\al,\al_{k+1}) =f_k(\al)\cos(\al_{k+1}) - g_k(\al)\sin(\al_{k+1}), $$
$$g_{k+1}(\al,\al_{k+1})= g_k(\al)\cos(\al_{k+1}) + f_k(\al)\sin(\al_{k+1})$$
for any $\al \in \oR^k$ and $\al_{k+1}\in \oR$.
\qed\end{proof}

Now we are ready to show that, for any  $n\in \oN$, one can compute $\cos(n\al)$ and $\sin (n\al)$ in terms of $\cos\al$  and $\sin \al$ in time polynomial in $\log n$.

Assume that  $n<2^p$ and write $n= 2^{p_1}+\ldots + 2^{p_k}$ in binary notation, where $0\le p_1<\ldots < p_k\le p-1$ and $k\le p$.
Then, $$\cos(n\al)=\cos(2^{p_1}\al +\ldots + 2^{p_k}\al), \ \ \sin (n\al)= \sin (2^{p_1}\al +\ldots + 2^{p_k}\al).$$
 By Claim \ref{lemcos2}, $\cos(n\al)$  and $\sin(n\al)$ can be computed from $\cos(2^{p_h}\al),\sin(2^{p_h}\al)$ ($h=1,\ldots,k$) in time polynomial in $k$ (and thus polynomial in $p$).
Combining with Claims \ref{lemcos1} and \ref{lemsin1}, $\cos (n\al)$ and $\sin(n\al)$ can be computed in time polynomial in $p=\log n$ in terms of $\cos\al$ and $\sin \al$.
\qed\end{proof}

Based on Lemma~\ref{lem:reduction} we can now  give the details of  the reduction. 

\begin{theorem}\label{theomaink2}
Given a circuit $C_n$ and $x\in\oQ^{C_n}$, it is $\NP$-hard to decide whether $x\in \EE_2(C_n)$.
\end{theorem}
\begin{proof}
We  give a polynomial time reduction of the partition problem to the  problem of testing membership in $\EE_2(C_n)$. For this let   $a_1,\ldots,a_n\in\oN$ be  an instance of  the partition problem. First  we choose  $\al\in (0,\pi]$ satisfying the three properties of Lemma~\ref{lem:reduction}. Then we construct an instance $(C_n,x) $ for our problem, by setting    $x_i=\cos(a_i\al)\in \oQ $ for 
$ i\in[n]$.  

We now establish  the correctness of the reduction. For this  we have to show that 
$a_1,\ldots,a_n$ can be partitioned if and only if  $x\in \EE_2(C_n)$.  By Proposition \ref{propE2Cn}, 
 $x=(\cos (a_1\al),\ldots,\cos (a_n\al)) \in \EE_2(C_n)$  if and only if there exists $\epsilon\in\{\pm 1\}^n$ such that
$\sum_{i=1}^n \epsilon_i a_i \al \in 2\pi\oZ$. The choice of $\al$ ensures that this last condition is equivalent to the fact that $a_1,\ldots,a_n$ can be partitioned.  Indeed, $|\sum_i\epsilon_i a_i\al|\le \al \sum_i a_i <1$ (using Lemma \ref{lem:reduction} (ii)), therefore   $\sum_{i=1}^n \epsilon_i a_i \al \in 2\pi\oZ$ implies that $\sum_{i=1}^n \epsilon_ia_i \al =0$. 

As a last step it remains to verify that this reduction can be carried out in polynomial time. This is guaranteed by~Lemma~\ref{lem:reduction} (iii). 
 \qed\end{proof}

}

\section{Testing  membership in $\convEk$ }\label{secconvEk}

In the previous section we showed that  testing membership in the rank constrained elliptope $\EE_k(G)$ is an $\NP$-hard problem  for any fixed $k\ge 2$.
A related question is to determine  the complexity of optimizing a linear objective function over $\EE_k(G)$ or, equivalently, over its convex hull $\convEk$. 
This question has been  raised, in particular, by Lov\'asz \cite[p.61]{Lo01} and  more recently in~\cite{BOV11}, and 
we will come back to it in Section \ref{secfinal}.  In turn, this  is related to the  problem of testing membership in the convex hull $\convEk$ which we address in this section. Specifically,  for any fixed $k \ge 1$ we consider the following problem:

 \medskip\noindent
 {\em  Given a graph $G=(V,E)$ and $x\in \oQ^E$, decide whether $x\in \convEk$.}
 \medskip

\ignore{ Let $k\ge 1$ be a fixed integer. In this section we discuss the complexity of testing membership in $\conv \EE_k(G)$, i.e., the complexity of the decision problem: 
\begin{align*}
{\rm Input}:& \  {\rm Graph } \ G=([n],E)\   {\rm and} \  x \in \oQ^E. 
\\{\rm Output}:&\ {\rm Does } \ x\in \conv \EE_k(G)\ ?
\end{align*}
Our main objective in this section is to show that this problem is NP-hard for any fixed $k\ge 2$ already when restricted to $x \in \ext(G)\cap \oQ^E$. 
}
For $k=1$, $\conv \spp \EE_1(G)$ coincides with the cut polytope of $G$, for which the membership problem is   $\NP$-complete~\cite{AD91}.
In this section we will show that this problem is $\NP$-hard for any fixed $k\ge 2$.
The key fact to prove  hardness  is to consider the membership problem in  $\convEk$ for extreme points of the elliptope $\EE(G)$. 

For a convex set $K$ recall that a point $x\in K$ is an {\em extreme point} of $K$ if $x=\lambda y+(1-\lambda)z$ where $0<\lambda <1$ and $y,z\in K$ implies that $x=y=z$.  The set of extreme points of $K$ is denoted by $\ext \spp K$.
Clearly, for   $x \in \ext \spp \EE(G)$, 
\begin{equation}\label{eqext}
 x \in \convEk \Longleftrightarrow x \in \EE_k(G).
\end{equation}

\medskip
Our strategy for showing hardness of membership in $\convEk$ is as follows: Given a graph $G=(V,E)$ and a rational vector $x\in \EE(G)$, we construct (in polynomial time) a new graph $\haG=(\haV, \haE)$ (containing $G$ as a subgraph) and a new rational vector $\hax\in\oQ^{\haE}$ (extending $x$) satisfying the following properties:
\begin{equation}\label{eqprop1}
\hax\in  \ext\spp \EE(\haG),
\end{equation}
\begin{equation}\label{eqprop2}
x\in \EE_k(G)\Longleftrightarrow \hax \in \EE_k(\haG).
\end{equation}
Combining these two conditions with (\ref{eqext}), we deduce:
\begin{equation}\label{eqprop3}
x\in \EE_k(G) \Longleftrightarrow \hax \in \EE_k(\haG) \Longleftrightarrow \hax\in \conv \spp \EE_k(\haG).
\end{equation}

\begin{figure}[h]
\centering \includegraphics[scale=1]{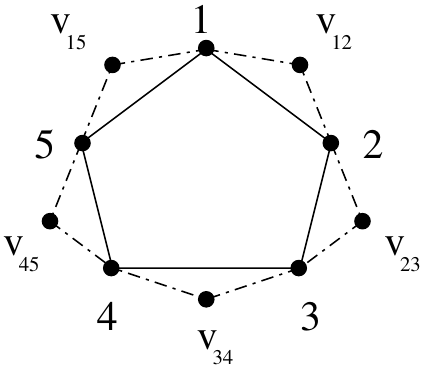}
\caption{The graph $\widehat C_5$.}
\label{graph_triang} 
\end{figure}

 Given $G=(V,E)$, the construction of the new graph $\haG=(\haV,\haE)$ is as follows: For each edge $\{i,j\}$ of $G$, we add a new node $v_{ij}$, adjacent to the two nodes $i$ and $j$. Let $C_{ij}$ denote  the clique on $\{i,j,v_{ij}\}$ and set $\haV=V\cup\{v_{ij}: \{i,j\}\in E\}$.
 Then $\haG$ has node set $\haV$ and its edge set is the union of all the cliques $C_{ij}$ for $\{i,j\}\in E$.  As an illustration Figure \ref{graph_triang} shows the graph $\widehat{C_5}$.

Given $x\in \oQ^E$, the construction of the new vector $\hax\in \oQ^{\haE}$ is as follows: For each edge $\{i,j\}\in E$,
\begin{equation}\label{eqxhat1}
\hax_{ij}=x_{ij},
\end{equation}
\begin{equation}\label{eqxhat2}
\hax_{i,v_{ij}}=4/5, \ \hax_{j,v_{ij}}=3/5 \ \ \ \  \ \   \ \text{ if } x_{ij}=0,
\end{equation}
\begin{equation}\label{eqxhat3}
\hax_{i, v_{ij}}=x_{ij},\ \hax_{j,v_{ij}}=2x_{ij}^2-1 \ \  \text{ if } x_{ij}\ne 0.
\end{equation}

 \medskip
 We will use the following result characterizing the extreme points of the elliptope $\EE_3$.

\begin{theorem}\label{thm:extE3}\cite{GPW90} A matrix $X=(x_{ij})\in \EE_3$ is an extreme point of $\EE_3$ if either $\rank (X)=1$,
   or $\rank (X) =2$ and $|x_{ij}|<1$ for all $i\not=j\in \{1,2,3\}$.
\end{theorem}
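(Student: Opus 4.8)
The plan is to use the standard characterization of extreme points of the elliptope in terms of \emph{hollow} perturbations (symmetric matrices with zero diagonal). First I would establish the following criterion: a matrix $X\in\EE_3$ is an extreme point provided the only symmetric $W$ with zero diagonal and $\ker X\subseteq\ker W$ is $W=0$. To see this, argue by contraposition: if $X$ is not extreme, write $X=\tfrac12(Y+Z)$ with $Y\ne Z$ in $\EE_3$ and set $W:=Y-X$. Since $Y$ and $X$ both have all-ones diagonal, $W$ is a nonzero hollow matrix, and $X\pm W=Y,Z\in\SSS^3_+$. For any $u\in\ker X$ one gets $0\le u^\sfT(X\pm W)u=\pm u^\sfT Wu$, forcing $u^\sfT Wu=0$; since $X+W\succeq0$ and $u^\sfT(X+W)u=0$, it follows that $(X+W)u=0$ and hence $Wu=0$. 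Thus $\ker X\subseteq\ker W$ with $W\ne0$, the desired contrapositive. Note $\ker X\subseteq\ker W$ is equivalent (for symmetric matrices) to the range of $W$ lying in the range of $X$.

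With this criterion in hand I would treat the two cases separately. For $\rank X=1$: here $X=vv^\sfT$ for some $v\in\{\pm1\}^3$ (the diagonal being all ones), so its range is the line $\oR v$. Any symmetric $W$ with range contained in $\oR v$ must be of the form $W=\al\,vv^\sfT$ for a scalar $\al$; the zero-diagonal condition gives $\al v_i^2=\al=0$, so $W=0$ and $X$ is extreme.

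For $\rank X=2$ with $|x_{ij}|<1$: now $\ker X=\oR u$ is one-dimensional, with $Xu=0$. The key preliminary step is to show that \emph{all three coordinates of $u$ are nonzero}. Indeed, if, say, $u_1=0$, then the second and third rows of $Xu=0$ read $u_2+x_{23}u_3=0$ and $x_{23}u_2+u_3=0$, whence $(1-x_{23}^2)u_3=0$; since $|x_{23}|<1$ this forces $u_3=0$ and then $u_2=0$, contradicting $u\ne0$. Writing the general hollow matrix as $W=\bigl(\begin{smallmatrix}0&a&b\\ a&0&c\\ b&c&0\end{smallmatrix}\bigr)$, the equations $Wu=0$ become $au_2+bu_3=0$, $au_1+cu_3=0$, $bu_1+cu_2=0$; solving the first and third for $a$ and $c$ in terms of $b$ and substituting into the second yields $-2b\,u_1u_3/u_2=0$, so $b=0$ (as all $u_i\ne0$) and then $a=c=0$. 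Hence $W=0$ and $X$ is extreme.

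I expect the main obstacle to be the rank-$2$ case, and within it the clean use of the hypothesis $|x_{ij}|<1$: it enters exactly twice, first to guarantee that the kernel direction $u$ has no zero coordinate, and then implicitly through the linear algebra that makes the three scalar equations $Wu=0$ collapse to $W=0$. The rank-$1$ case and the extremality criterion itself are routine by comparison.
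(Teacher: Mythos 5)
Your proposal is correct in every step: the hollow-perturbation criterion (if the only symmetric $W$ with zero diagonal and $\ker X\subseteq \ker W$ is $W=0$, then $X$ is extreme) is established properly via the contrapositive, the rank-one case is handled cleanly, and the rank-two case correctly exploits $|x_{ij}|<1$ twice, exactly as you describe. One point of comparison is moot, however: the paper does not prove this statement at all --- it quotes it as a known result with the citation \cite{GPW90}, so there is no in-paper argument to measure yours against. What you have written is essentially the standard proof from the literature: the criterion you derive is precisely the facial/perturbation characterization of extreme points of the elliptope that underlies the Grone--Pierce--Watkins analysis (in general form: $X=V^{\sfT}V$ of rank $r$ is extreme iff the perturbation space has dimension zero, which for $\EE_n$ forces $r(r+1)/2\le n$), specialized to $n=3$ where it can be checked by the explicit $3\times 3$ linear algebra you carry out. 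So your write-up serves as a correct, self-contained proof of the cited theorem; the only thing you could add for completeness is the remark that your criterion is in fact an equivalence (the converse direction follows since a nonzero hollow $W$ with $\ker X\subseteq\ker W$ yields $X\pm tW\in\EE_3$ for small $t>0$), which also shows that the two cases in the statement exhaust all extreme points of $\EE_3$, though the paper only needs the sufficiency you proved.
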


We also need the following well known (and easy to check) result permitting to construct points in the elliptope of clique sums of graphs.

\begin{lemma}\label{lemcliquesum}
Given two graphs $G_l=(V_l,E_l)$ ($l=1,2$), where $V_1\cap V_2$ is a clique in both $G_1$, $G_2$,  the graph 
$G=(V_1\cup V_2,E_1\cup E_2)$ is called  their {\em clique sum}.  Given $x_l\in \oR^{E_l}$ ($l=1,2$) such that $(x_1)_{ij}=(x_2)_{ij}$ for $i,j \in V_1\cap V_2$,  let  $x=(x_{ij}) \in \oR^E$ be their common extension, defined as  $x_{ij}=(x_l)_{ij}$ if $i,j\in V_l$. Then, for any integer $k\ge 1$, 
$$x\in \EE_k(G) \Longleftrightarrow  x_1\in \EE_k(G_1) \text{ and } x_2\in \EE_k(G_2).$$
\end{lemma}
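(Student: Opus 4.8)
The plan is to prove the two implications separately, working throughout with the reformulation $x\in \EE_k(G)\Longleftrightarrow \gd(G,x)\le k$, i.e. the existence of a labeling of the nodes by \emph{unit} vectors in $\oR^k$ whose pairwise inner products match the prescribed entries along the edges. The forward implication ($\Rightarrow$) is immediate: if $\{u_i: i\in V_1\cup V_2\}\subseteq\oR^k$ is a Gram representation of $x$, then since $E_1,E_2\subseteq E$ and $x$ extends both $x_1$ and $x_2$, restricting the labeling to $V_1$ (resp.\ to $V_2$) yields a Gram representation of $x_1$ (resp.\ $x_2$) in $\oR^k$. Hence $x_1\in\EE_k(G_1)$ and $x_2\in\EE_k(G_2)$.

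The content lies in the converse ($\Leftarrow$). First I would take unit-vector Gram representations $\{u_i: i\in V_1\}$ of $x_1$ and $\{w_i: i\in V_2\}$ of $x_2$, both living in $\oR^k$. The key observation is that $S:=V_1\cap V_2$ is a clique in \emph{both} $G_1$ and $G_2$, so every pair $\{i,j\}$ with $i,j\in S$ is an edge of $G_1$ and of $G_2$; consequently the Gram submatrices $(u_i^\sfT u_j)_{i,j\in S}$ and $(w_i^\sfT w_j)_{i,j\in S}$ are both completely determined, and by the compatibility hypothesis $(x_1)_{ij}=(x_2)_{ij}$ on $S$ they coincide. In other words, the two vector families $\{u_i: i\in S\}$ and $\{w_i: i\in S\}$ have the same Gram matrix.

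I then invoke the standard fact that two families of vectors in $\oR^k$ sharing the same Gram matrix are related by an orthogonal transformation: there exists $Q\in O(k)$ with $Qw_i=u_i$ for all $i\in S$. (This holds because the map $\sum_i c_i w_i\mapsto \sum_i c_i u_i$ is a well-defined linear isometry between the two equal-dimensional spans, which extends to an element of $O(k)$ by matching orthonormal bases of the orthogonal complements.) Replacing each $w_i$ by $Qw_i$ preserves all inner products, so $\{Qw_i: i\in V_2\}$ remains a Gram representation of $x_2$, and it now agrees with $\{u_i\}$ on $S$. Defining the combined labeling by $u_i$ on $V_1$ and by $Qw_i$ on $V_2$ is therefore unambiguous on the overlap, consists of unit vectors, and matches $x$ on every edge of $E=E_1\cup E_2$ (an edge in $E_1$ is handled by the $u$'s, one in $E_2$ by the $Qw$'s). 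This produces a Gram representation of $x$ in $\oR^k$, i.e.\ $x\in\EE_k(G)$.

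The only delicate point — and the one I would state carefully — is this orthogonal gluing step: it is precisely the clique assumption on $V_1\cap V_2$ that guarantees the overlap block is fully specified and identical from both sides, which is what allows me to align the two representations by a single $Q\in O(k)$ without enlarging the ambient dimension beyond $k$. Everything else is routine bookkeeping, and I expect no further obstacles.
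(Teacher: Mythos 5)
Your proof is correct: the restriction argument for the forward direction, and for the converse the observation that the clique hypothesis forces the two Gram matrices to agree on $V_1\cap V_2$, followed by alignment via an orthogonal $Q\in O(k)$ and gluing, is exactly the standard verification. The paper itself states Lemma \ref{lemcliquesum} without proof, calling it ``well known (and easy to check),'' and your argument is precisely the check the authors had in mind, so there is nothing to compare beyond noting that you supplied the details (including the key well-definedness of the isometry on the spans) that the paper omits.
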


We can now show that our construction for $\hax$ satisfies the two properties (\ref{eqprop1}) and (\ref{eqprop2}).

\begin{lemma}\label{lemreduc}
Given a graph $G=(V,E)$ and $x\in \oQ^E$, let $\haG=(\haV,\haE)$ be defined as above and   let $\hax\in \oQ^{\haE}$ be defined by (\ref{eqxhat1})-(\ref{eqxhat3}). 
For fixed $k\ge 2$ we have that  $x\in \EE_k(G)$ if and only if   $\hax\in \EE_k(\haG)$ and $\hax\in \ext \spp \EE(\haG)$.\end{lemma}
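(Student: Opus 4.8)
The plan is to verify the two required properties separately, using the clique-sum structure of $\haG$ together with the explicit characterization of extreme points of $\EE_3$ from Theorem~\ref{thm:extE3}. The key structural observation is that $\haG$ is built by attaching, to each edge $\{i,j\}$ of $G$, a triangle $C_{ij}$ on $\{i,j,v_{ij}\}$ sharing only the edge $\{i,j\}$ with $G$. Thus $\haG$ is obtained from $G$ by a sequence of clique sums along single edges (the common clique being $\{i,j\}$), and Lemma~\ref{lemcliquesum} will let me reduce everything about $\EE_k(\haG)$ to conditions on $G$ and on the individual triangles.

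First I would establish property (\ref{eqprop2}), namely $x\in \EE_k(G)\Leftrightarrow \hax\in \EE_k(\haG)$. Applying Lemma~\ref{lemcliquesum} repeatedly, $\hax\in \EE_k(\haG)$ holds if and only if $x\in \EE_k(G)$ \emph{and} for every edge $\{i,j\}$ the restriction of $\hax$ to the triangle $C_{ij}$ lies in $\EE_k(K_3)$. So the crux is to check that, for each edge, the triangle data defined by (\ref{eqxhat2})--(\ref{eqxhat3}) gives a point of $\EE_k(K_3)$ for every $k\ge 2$. Since $\EE_2(K_3)\subseteq \EE_k(K_3)$ for $k\ge 2$, it suffices to show each triangle point has Gram dimension at most $2$, i.e.\ lies in $\EE_2(K_3)$. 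I would verify this by exhibiting three explicit unit vectors in $\oR^2$ realizing the prescribed inner products; for instance in case (\ref{eqxhat3}) the choices $x_{ij}=\cos\theta$, $\hax_{i,v_{ij}}=\cos\theta$, $\hax_{j,v_{ij}}=\cos 2\theta$ are consistent with placing $i,j,v_{ij}$ at angles $0,\theta,2\theta$ (using $\cos 2\theta = 2\cos^2\theta -1$), and case (\ref{eqxhat2}) corresponds to the $3$-$4$-$5$ angles where $\cos\theta_1=4/5,\cos\theta_2=3/5$ sum appropriately. Thus each triangle point is realizable in $\oR^2$ regardless of the value of $x_{ij}$, so the triangle conditions are automatically satisfied and $\EE_k(\haG)$-membership collapses to $\EE_k(G)$-membership.

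Next I would prove property (\ref{eqprop1}), that $\hax$ is an extreme point of $\EE(\haG)$. The natural tool is a local criterion: a point of $\EE(\haG)$ is extreme if and only if its restriction to each maximal clique is an extreme point of the corresponding elliptope, which for the triangles means using Theorem~\ref{thm:extE3}. Concretely, I would argue that if $\hax=\lambda y+(1-\lambda)z$ with $y,z\in\EE(\haG)$ and $0<\lambda<1$, then restricting to each triangle $C_{ij}$ forces the triangle restrictions to agree, because the triangle value of $\hax$ is designed to be an \emph{extreme} point of $\EE(K_3)\cong\EE_3$. The design of (\ref{eqxhat2})--(\ref{eqxhat3}) guarantees this: the triangle point has rank exactly $2$ (it is a genuinely $2$-dimensional configuration, not collinear) and all three off-diagonal entries have absolute value strictly less than $1$ — this is exactly where the choices $4/5,3/5$ and $2x_{ij}^2-1$ matter, since they keep the entries away from $\pm 1$ while making the three vectors affinely spanning a plane. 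By Theorem~\ref{thm:extE3} each such triangle restriction is extreme in $\EE_3$, which pins down $y$ and $z$ on every triangle edge, and since every edge of $\haG$ belongs to some triangle $C_{ij}$, this forces $y=z=\hax$ on all of $\haE$.

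\textbf{The main obstacle} I expect is the extreme-point argument, specifically justifying that edgewise (clique-wise) extremality of the triangle restrictions implies global extremality of $\hax$ in $\EE(\haG)$. The clique-sum lemma as stated handles $\EE_k$-membership but not directly extremality, so I would need a short supporting argument: given a convex decomposition $\hax=\lambda y+(1-\lambda)z$, the restriction to each clique $C_{ij}$ is a convex decomposition of an extreme point of $\EE(C_{ij})$, hence $y$ and $z$ agree with $\hax$ on the edges of $C_{ij}$; ranging over all edges $\{i,j\}$ covers all of $\haE$, giving $y=z=\hax$. The delicate point is verifying the two conditions of Theorem~\ref{thm:extE3} (rank exactly two and strict inequality $|x_{ij}|<1$ on all three triangle entries) \emph{uniformly} across both cases (\ref{eqxhat2}) and (\ref{eqxhat3}) and for every possible rational value $x_{ij}\in[-1,1]$, including the boundary values $x_{ij}=\pm 1$ where one must double-check the construction still yields a rank-$2$, strictly-interior triangle; ensuring this is precisely why the two separate cases in (\ref{eqxhat2})--(\ref{eqxhat3}) were introduced.
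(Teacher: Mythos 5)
Your proposal is correct and takes essentially the same route as the paper's proof: iterated clique sums (Lemma~\ref{lemcliquesum}) give the equivalence $x\in\EE_k(G)\Leftrightarrow\hax\in\EE_k(\haG)$ once each triangle is realized in rank at most $2$, and extremality of $\hax$ follows by restricting any convex decomposition to the triangles $C_{ij}$, which are extreme in $\EE_3$ by Theorem~\ref{thm:extE3} and cover all edges of $\haG$. The only point to correct in your final paragraph: when $x_{ij}=\pm 1$ the triangle defined by (\ref{eqxhat3}) is \emph{not} a rank-$2$ point with entries of modulus strictly less than $1$ but has rank $1$, and its extremality then follows from the rank-$1$ branch of Theorem~\ref{thm:extE3}, so the case analysis closes without any separate construction.
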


\begin{proof} Sufficiency follows  trivially so it remains to prove necessity. Observe that, for each edge $\{i,j\}\in E$,  the restriction $\hax_{C_{ij}}$ of $\hax$ to the clique $C_{ij}$ is an extreme point of $\EE(C_{ij})$.  
Indeed, applying Theorem \ref{thm:extE3}, we find that  the following matrices 
 $$
 \left(\begin{array}{ccc}
 1 & 0 & 3/5\cr 0 & 1 & 4/5 \cr 3/5 & 4/5 & 1 
  \end{array}\right),\ 
    \left(\begin{array}{ccc}
 1 &  x_{ij}&  x_{ij}\\
 x_{ij} & 1& 2x_{ij}^2-1\\
 x_{ij}& 2x_{ij}^2-1& 1
 \end{array}\right) \ \text{ where } x_{ij}\in [-1,1]\setminus \{0\}
 $$
are  extreme points of $\EE_3$ so have  rank at most 2.
By construction,   $\haG$ is obtained as the clique sum of $G$ with the cliques $C_{ij}$. Hence, by Lemma \ref{lemcliquesum},
  we deduce that $\hax\in \EE_k(\haG)$. 
  
  Finally, we show that $\hax$ is an extreme point of $\EE(\haG)$, which follows from the fact that each $\hax_{C_{ij}}$ is an extreme point of $\EE(C_{ij})$, combined with  the fact that the cliques $C_{ij}$ (for $\{i,j\}\in E$) cover the graph $G$. Indeed, assume $\hax=\sum_{i=1}^m\lambda_i\hax_i$ where $\lambda_i> 0$, $\sum_{i=1}^m\lambda_i=1$  and $\hax_i\in \EE(\haG)$. Taking the projection onto the clique $C_{ij}$ and using the fact that $\hax_{C_{ij}} \in \ext\spp \EE(C_{ij})$ we deduce that, for all $k$,  $(\hax_k)_{C_{ij}}=\hax_{C_{ij}}$ for all $\{i,j\}\in E$ and thus $\hax=\hax_k$. \qed
  \end{proof}

\ignore{
\begin{lemma}\label{lem:cover} Let $G=\cup_{i=1}^kG_i$ a covering of the graph $G=([n],E)$  and consider  $x \in \EE(G)$ such that $x_{G_i}\in \ext\EE(G_i),\  \forall i \in [k]$. Then $x\in \ext\EE(G)$.
\end{lemma}
\begin{proof}
Consider  $y,z \in \EE(G)$ and $\lambda \in (0,1)$ satisfying $ x=\lambda y+(1-\lambda )z$. In particular, since $x_{G_i} \in \ext \EE(G_i)$ it follows that $x_{G_i}=y_{G_i}=z_{G_i}, \ \forall i \in [k]$.
Since $G$ is covered by the subgraphs $G_1,\ldots,G_k$, the claim follows.
\end{proof}\qed

Next we  describe the construction that will enable us  to show  that  testing membership in $\EE_k(G)$ is already hard for extreme points of $\EE(G)$. For a graph $G=([n],E)$ we denote by $\widetilde{G}$ a new graph obtained as follows: For every edge $ij \in E$  we create a 3-clique $C_{ij}$  by  adding  a new node $v_{ij}$ adjacent to both $i$ and $j$\footnote{add figure}. Clearly  $\widetilde{G}=\cup_{ij \in E}C_{ij}$  so in order to use  Lemma~\ref{lem:cover}  to construct extreme points of $\EE(\widetilde{G})$ we should  be able to check whether $x_{C_{ij}} \in \ext\EE(C_{ij})$.  This can be done since the extreme points of $\EE_3$  are completely characterized.

Given  a vector $x \in \EE(G)$ we construct a new vector $\wt{x} \in \oR^{E(\wt{G})}$ as follows: If $(i,j) \in E$ then $\wt{x}_{ij}=x_{ij}$ so it remains to specify the entries $\wt{x}_{u_{ij},i}$ and $\wt{x}_{u_{ij},j}$ for every $(i,j) \in E$.
This is done in the following manner:

\begin{itemize}
\item If $x_{ij}=0$ then  $\wt{x}_{u_{ij},i}=4/5$ and $\wt{x}_{u_{ij},j}=3/5$.
\item If $x_{ij}=1$ then  $\wt{x}_{u_{ij},i}=\wt{x}_{u_{ij},j}=1$.
\item If $x_{ij}=-1$ then  $\wt{x}_{u_{ij},i}=1$ and $\wt{x}_{u_{ij},j}=-1$.

\item If $x_{ij}\not=0,\pm1$  then $\wt{x}_{u_{ij},i}=x_{ij}$  and $\wt{x}_{u_{ij},j}=2x_{ij}^2-1$.
\end{itemize}
Putting all  the pieces together we can now state and prove the main result of this section. 
 
 \begin{theorem}\label{thm:egd} Consider  a vector  $x \in \EE(G)\cap \oQ^E$ and let $\wt{x} \in \oR^{E(\wt{G})}$ constructed as above.   Then (i) $\wt{x} \in \ext \EE(\wt{G})\cap\oQ^{E(\wt{G})}$ and (ii) for any fixed $k \ge 2$ we have that $\gd(G,x)\le k \Leftrightarrow \gd(\wt{G},\wt{x})\le k$.
 \end{theorem}
 
 \begin{proof}(i) If $x \in \EE(G)\cap \oQ^E$ then by construction  $\wt{x}\in \oQ^{E(\wt{G})}$. Moreover, the choice of the values for  $\wt{x}$ on  the edges $(v_{ij},i)$ and $(v_{ij},j)$  ensures that the $3 \times 3$ matrix corresponding to the clique $C_{ij}$ is psd and thus $\wt{x} \in \EE(\wt{G})$\footnote{should we put the c-sumlemma?}.
 
 To show that $\wt{x} \in \ext\EE(\wt{G})$ we will make use of Lemma~\ref{lem:cover}.  By construction we have that $\wt{G}=\cup_{ij \in E}C_{ij}$ so it suffices to show  that $\wt{x}_{C_{ij}}\in \ext \EE (C_{ij}), \ \forall ij \in E$. This is  guaranteed by the choice of the values for  $\wt{x}$ on the edges  $(v_{ij},i)$ and $(v_{ij},j)$ since the corresponding   $3 \times 3$ matrix  will satisfy the 
 hypothesis of Theorem~\ref{thm:extE3}. 
 
 For example,  consider the  case  when   $x_{ij}\not=0,\pm1$ for some $ij \in E$. Choosing the values for $\wt{x}_{C_{ij}}$ as prescribed above we obtain the matrix 
 $$\left(\begin{array}{ccc}
 1 &  x_{ij}&  x_{ij}\\
 x_{ij} & 1& 2x_{ij}^2-1\\
 x_{ij}& 2x_{ij}^2-1& 1
 \end{array}\right).$$
One can easily verify that this matrix has rank 2 and that it is psd.  Moreover, since $x_{ij}\not=0,\pm1$ we have that $|x_{ij}|<1 $ and $|2x_{ij}^2-1|<1$ so by  Theorem~\ref{thm:extE3} it is an extreme point of $\EE_3$.
 
 (ii) Sufficiency is clear so it suffices to determine necessity. Notice that  $\wt{G}$ is the clique 2-sum of $G$ and the cliques $C_{ij}$ for all $(i,j) \in E$.  By construction,  the $3 \times  3$ matrices corresponding to the cliques $C_{ij}$ are extreme points so their  rank is  at most 2. Thus  we can find a completion for $\wt{x}$ of rank at most $k$  for every $k\ge 2$.
 \end{proof}\qed
 }
 
Combining Theorems \ref{theomaink3} and \ref{theogd2}  with Lemma \ref{lemreduc} and relation  (\ref{eqprop3}) we deduce the following complexity result.

\begin{theorem} \label{theoextgd}
For any fixed $k\ge 2$, testing membership in $\convEk$ is an $\NP$-hard problem.
 
 \end{theorem}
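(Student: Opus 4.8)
The plan is to assemble the final hardness result by combining the construction $x \mapsto \hax$ (mapping an instance $(G,x)$ to the instance $(\haG,\hax)$) with the hardness results for membership in $\EE_k(G)$ already established in Theorems~\ref{theomaink3} and \ref{theogd2}. The essential observation is relation (\ref{eqprop3}), which reduces membership in $\convEk$ to membership in $\EE_k(G)$ via an intermediate problem involving extreme points. Concretely, for any fixed $k\ge 2$, I would show that the map $(G,x)\mapsto (\haG,\hax)$ is a polynomial-time reduction from the (known $\NP$-hard) problem ``decide whether $x\in\EE_k(G)$'' to the problem ``decide whether $\hax\in\convEk[\haG]$''.

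First I would verify that the reduction is computable in polynomial time: the graph $\haG$ has $|V|+|E|$ nodes and $3|E|$ edges, so it is constructed in polynomial time, and the new entries of $\hax$ prescribed by (\ref{eqxhat1})--(\ref{eqxhat3}) are explicit rationals of controlled bit-size (the values $3/5$, $4/5$, $x_{ij}$, and $2x_{ij}^2-1$), hence $\hax\in\oQ^{\haE}$ is computed in polynomial time. Second, I would invoke Lemma~\ref{lemreduc}, which is the technical heart already proved in the excerpt: it gives both that $\hax\in\ext\spp\EE(\haG)$ always holds for this construction and that $x\in\EE_k(G)\Longleftrightarrow \hax\in\EE_k(\haG)$. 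Third, since $\hax$ is an extreme point of $\EE(\haG)$, relation (\ref{eqext}) applies and yields $\hax\in\EE_k(\haG)\Longleftrightarrow \hax\in\convEk[\haG]$; chaining these equivalences gives exactly (\ref{eqprop3}), namely
\begin{equation*}
x\in\EE_k(G)\Longleftrightarrow \hax\in\convEk[\haG].
\end{equation*}

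To complete the argument I would supply the source of hardness for the left-hand side. For $k=2$, Theorem~\ref{theogd2} asserts that deciding $x\in\EE_2(G)$ is $\NP$-hard; for $k\ge 3$, Theorem~\ref{theomaink3} asserts that deciding $\gd(G,\bfO)\le k$ (equivalently $\bfO\in\EE_k(G)$) is $\NP$-hard. In either case the decision problem on the left is $\NP$-hard, and the equivalence above transfers this hardness to the membership problem in $\convEk[\haG]$. Since $k$ is fixed, this establishes that testing membership in $\convEk$ is $\NP$-hard for every fixed $k\ge 2$, as claimed.

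I do not anticipate a genuine obstacle in this final step, because the substantive work has already been localized: the delicate parts are the extreme-point characterization of $\EE_3$ (Theorem~\ref{thm:extE3}) and the clique-sum completion principle (Lemma~\ref{lemcliquesum}), both used inside Lemma~\ref{lemreduc}. The only point requiring a moment of care is the direction $x\in\EE_k(G)\Rightarrow \hax\in\EE_k(\haG)$, where one must confirm that each local matrix $\hax_{C_{ij}}$ admits a rank-$\le 2$ (hence rank-$\le k$) completion so that the clique sum preserves the rank bound; this is exactly what the rank-$2$ extreme-point computation in Lemma~\ref{lemreduc} guarantees, and it causes no difficulty since $k\ge 2$.
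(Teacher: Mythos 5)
Your proof is correct and takes essentially the same route as the paper, whose own (one-line) proof of Theorem \ref{theoextgd} likewise combines the polynomial-time construction $(G,x)\mapsto(\haG,\hax)$, Lemma \ref{lemreduc}, and relation (\ref{eqprop3}) (i.e., (\ref{eqext}) applied to the extreme point $\hax$) with the hardness sources Theorems \ref{theomaink3} and \ref{theogd2}. The extra details you supply --- polynomial-time computability of the reduction and the rank-$2$ local completions making the clique-sum argument respect the rank bound for $k\ge 2$ --- are exactly the ingredients the paper delegates to Lemma \ref{lemreduc} and its proof.
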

 
 \section{Concluding remarks}\label{secfinal}
 
 In this note we have shown $\NP$-hardness of the membership problem in the  rank constrained  elliptope $\EE_k(G)$ and in its convex hull $\convEk$,  for any fixed $k\ge 2$.
 As mentioned earlier, it would be interesting to settle the complexity status of linear optimization over $\convEk$.
 The  case $k=1$ is settled: Then  $\conv \spp \EE_1(G)$ is the cut polytope and both the membership problem and the linear optimization problem are $\NP$-complete. 
 For $k\ge 2$, the convex set $\convEk$ is  in general non-polyhedral. Hence the right question to ask is about the complexity of  the {\em weak} optimization problem. It follows from general results about the ellipsoid method (see, e.g., \cite{GLS} for details) that the weak optimization problem and the weak membership problems for $\convEk$  have the same complexity status.
 Although we could prove that the (strong) membership problem in $\convEk$ is $\NP$-hard, we do not know whether this is also the case for the  {\em weak} membership problem.
 
 A second question of interest  is whether the problems belong to $\NP$. Indeed it is not clear how to find {\em succinct}  certificates for membership in $\EE(G)$ or in $\EE_k(G)$. For one thing, even if the given partial matrix $x$ is rational valued and is completable to a psd matrix, it is not known whether it admits a {\em rational}  completion.  (A positive result has been shown in \cite{Lau00} in the case of chordal graphs, and for graphs with minimum fill-in 1).
 In a more general setting, it is not known whether the problem of testing feasibility of a semidefinite program belongs to $\NP$.
On the positive side it is known that this problem belongs to $\NP$ if and only if it belongs to co-$\NP$ \cite{Ra97} and that it can be solved in polynomial time when fixing the dimension or the number of constraints \cite{PK97}.
 
   \ignore{

\section{Final remarks and open questions }\label{secfinal}
The question of deciding whether $x\in \oQ^E$ belongs to $\EE(G)$ (i.e., admits a Gram representation by unit vectors) 
is a special instance of testing feasibility of an SDP, namely, we must test whether there exists a matrix $X\succeq 0$ such that $X_{ii}=1$ ($i\in [n]$) and $X_{ij}=x_{ij}$ ($ij\in E$).

It is known that testing feasibility of SDP belongs to NP if and only if it belongs to co-NP(see \cite{Ra97}, or the recent treatment of \cite{KS11}).

It is an open question whether the problem belongs to NP.
The most natural certificate would be to give a psd completion.
Thus arises naturally the following question, whether a {\em rational} completion exists:

\medskip \noindent
{\bf The rational completion problem:} 
{\em Given a rational vector $x\in \oQ^E$, if $x$ admits a psd completion, is it true that it admits a rational psd completion?}

\medskip
The answer is yes when the graph $G$ is chordal, or can be made chordal by adding an edge \cite{Lau00}.
Moreover, in these cases, there is also a polynomial time algorithm for constructing a psd completion of a rational vector $x\in (G)$.

\medskip
Thus arises naturally the question whether  the rational completion problem also has a positive answer when $G$ is not chordal. The smallest unknown case is when $G=C_5$ (as then one needs to add two edges to  get a chordal graph).

\medskip\noindent
{\bf Open question:} Given rational $x\in \EE(C_5)$, does there exist a rational psd completion?

Recall from Proposition \ref{propEG} that $x\in \EE(C_n)$ if and only if $a={1\over \pi}\arccos x$ satisfies 
all inequalities:
\begin{equation}\label{relCn}
a(F)-a(C_n\setminus F)\le |F|-1
\end{equation}
for all odd subsets $F\subseteq C_n$. Note that if we add a chord (say the pair $e=(1,3)$) to $C_n$, we get a graph which still has no $K_4$ minor. Hence it suffices to define the missing value $x_{13}$ in such a way that the circuit inequalities on the two circuits $(1,2,3)$ and $(1,3,\ldots,n)$ are satisfied.
If $a$ satisfies all  the inequalities (\ref{relCn}) strictly then the set of possible values for $a_{13}$ is an interval, not reduced to a point, so that one can choose a rational value for $x_{13}$.
Thus remains to consider the case when $a$ satisfies an inequality (\ref{relCn}) at equality.

For instance, in the case when $G=C_5$, let us assume that $a$ satisfies:
$$a_{15}=a_{12}+a_{23}+a_{34}+a_{45}.$$ Moreover, assume that
$\cos a_{12}, \cos a_{23},\cos a_{34},\cos a_{45},\cos a_{15}\in \oQ$.
Then, to get a psd completion of $x$, one must choose $a_{25}$ such that $a_{15}=a_{12}+a_{25}$, and analogously all missing entries $a_{i,i+2}$ are uniquely determined.
Hence there are two possibilities:\\
$\bullet$ Either one (and thus all) of $a_{25}$, $a_{13}$, $a_{24}$, $a_{35}$, $a_{14}$ are rational,\\
$\bullet$ Either one (and thus all) of $a_{25}$, $a_{13}$, $a_{24}$, $a_{35}$, $a_{14}$  are irrational.
\\
Which is the good answer???

\medskip
Note that, in the case of an equality in a cycle inequality (\ref{relCn}), then the (unique) psd completion of $x$ will have rank 2.
Indeed, after completing the entries on the chords $(1,3),\ldots, (1,n-1)$, we will get a chordal graph, with maximal cliques of size 3 and with corresponding fully specified submatrices of rank at most 2. (Use here the result of \cite{GJSW84}).

So we are now questioning whether any rational $x\in \EE_2(C_n)$ has a rational completion in $\EE_2(C_n)$.
This thus is relevant to the question whether the problem of testing whether $\gd(G,x)\le 2$ belongs to NP.

}

\medskip
{\bf Acknowledgements.} We thank A. Schrijver for useful discussions and a referee for drawing our attention to the paper by Aspnes et al. \cite{Aspnes}.

\end{document}